\DeclarePairedDelimiter\abs{\lvert}{\rvert}
\DeclarePairedDelimiter\norm{\lVert}{\rVert}
\numberwithin{equation}{section}
\newtheoremstyle{iremark}
  {\topsep}   
  {\topsep}   
  {\upshape}  
  {0pt}       
  {\itshape}  
  {.}         
  {5pt plus 1pt minus 1pt} 
  {\thmname{#1}\thmnumber{ \itshape#2}\thmnote{ (#3)}} 
\theoremstyle{iremark}
\newenvironment{nalign}{
    \begin{equation}
    \begin{aligned}
}{
    \end{aligned}
    \end{equation}
    \ignorespacesafterend
}
\newcommand{\spc}{{\hspace{0.5cm}}}
\newcommand{\jmh}{{j-\frac{1}{2}}}
\newcommand{\jph}{{j+\frac{1}{2}}}
\newcommand{\half}{\frac{1}{2}}
\newcommand*\dif{\mathop{}\!\mathrm{d}}
\newcommand{\bs}{\boldsymbol}
\newcommand{\cblue}[1]{\textcolor{black}{#1}}
\newcommand{\iph}{i+\frac{1}{2}}
\newcommand{\imh}{i-\frac{1}{2}}
\newcommand{\no}{\nonumber}
\begin{document}

\title{A positivity-preserving  second-order scheme for multi-dimensional systems of non-local conservation laws
\thanks{This work was done while one of the authors, G D Veerappa
Gowda, was a Raja Ramanna Fellow at TIFR-Centre for Applicable Mathematics, Bangalore. Nikhil Manoj gratefully acknowledges the financial support from the Council of Scientific and Industrial Research (CSIR), Government of India, in the form of a doctoral fellowship.
}
}
\subtitle{}

\titlerunning{Second-order scheme for systems of non-local conservation laws}        

\author{Nikhil Manoj\,\orcidlink{0009-0003-6812-8633}\and G. D. Veerappa Gowda \orcidlink{0009-0009-8525-4790}        \and Sudarshan Kumar K. \orcidlink{0009-0005-1186-4167}
}

\institute{
Nikhil Manoj \at
              School of Mathematics\\
Indian Institute of Science Education and Research\\
Thiruvananthapuram, India-695551\\
\email{nikhilmanoj2020@iisertvm.ac.in} \and
G. D. Veerappa Gowda \at
              Centre for Applicable Mathematics\\
Tata Institute of Fundamental Research\\
Bangalore, India - 560065\\
\email{gowda@tifrbng.res.in}           
           \and
           Sudarshan Kumar K. \at
              School of Mathematics\\
Indian Institute of Science Education and Research\\
Thiruvananthapuram, India-695551\\
\email{sudarshan@iisertvm.ac.in} 
}

\date{Received: date / Accepted: date}

\maketitle

\begin{abstract}
   In this article, we present and analyze a fully discrete second-order scheme for a general class of non-local system of conservation laws  in multiple spatial dimensions. The method employs a MUSCL-type spatial reconstruction coupled with Runge-Kutta time integration. We analytically prove that the proposed scheme preserves the positivity in all the {unknowns}, a critical property for ensuring the physical validity of quantities like density, which must remain non-negative. Additionally, the scheme is proven to exhibit $\mathrm{L}^\infty$-stability. Numerical experiments conducted on both the non-local scalar and system cases illustrate the importance of the second-order scheme when compared to its first-order counterpart and verify the theoretical results.
   \keywords{Non-local conservation laws \and MUSCL method \and Second-order scheme \and Positivity-preserving scheme}
\subclass{35L65\and 76A30\and 65M08\and 65M12.}

\end{abstract}

\section{Introduction}
Non-local conservation laws have emerged as a vital tool in modeling various physical phenomena, including traffic flow \cite{bayen2021,blandin2016,chiarello-globalentropy,chiarellotosin2023,ciotirfayad2021,SopasakisKatsoulakis2006}, crowd dynamics \cite{aggarwal2016b,burgergoatin2020,colomboGaravelloMercier2012,ColomboHertyMercier2011,ColomboMercier2012,colomborossi2018,colomborossi_2019},  structured population dynamics in biology \cite{Perthame_book2007}, sedimentation \cite{betancourt2011}, supply chains \cite{armbruster2006continuum} and conveyor belts \cite{Gottlich2014,Rossi2019WellposednessOA}. \cblue{ In many of these applications, the inclusion of non-local terms in the flux functions offers a more precise framework for capturing interactions between densities, such as those in crowd dynamics or traffic flow models. In this study, our focus is on a class of system of  non-local conservation laws in several space dimensions. For the one-dimensional case, non-local conservation laws have been well-studied in the literature from both theoretical and numerical points of view, for example, see \cite{aggarwal_holden_vaidya2024,aggarwalima2024,aggarwalvaidya2025,goatin2016,kuang2024,keimer2017}.}  However, their extension to multiple space dimensions is comparatively less explored, with only a limited number of results addressing its well-posedness.  \cblue{For instance, the authors in \cite{aggarwal2015}  proved the existence of a weak solution for a general system in two dimensions by establishing the convergence of a dimensionally split scheme with Lax-Friedrichs numerical flux.}  \cblue{Additionally}, the existence and uniqueness of measure-valued solutions to a class of multi-dimensional {\cblue {problems}} were analyzed in \cite{crippa2013}. Local-in-time existence and uniqueness results for certain multi-dimensional non-local equations under weak differentiability assumptions on the convolution kernel were recently studied in \cite{colombo2024multidimensional}. Furthermore, the error analysis of  first-order finite volume schemes for a one-dimensional problem was presented in \cite{aggarwal_holden_vaidya2024}, and its extension to the  multi-dimensional case was also discussed.  In this work, we are interested in the general system of multi-dimensional non-local conservation laws \cblue{treated} in \cite{aggarwal2015}. 
\par
Although  first-order \cblue{numerical} methods are reliable and aid in  ensuring well-posedness of the underlying problems, second- and higher-order methods offer substantially improved accuracy, particularly for two and three-dimensional problems. This has led to an increasing emphasis on research aimed at developing  high-order methods.  In the context of \cblue{non-local conservation laws, for} one-dimensional problems, convergence results are available for second-order schemes.  For example, the convergence of a second-order scheme to the entropy solution for a class of one-dimensional problems was analyzed in \cite{gowda2023}. Also, see \cite{burgerconterasvillada2023,chiarello2019multiclass,GoatinScialanga} for more numerical results in this direction. Furthermore, high-order DG and CWENO schemes were discussed for the one-dimensional case in \cite{chalons2018} and \cite{friedrich2019CWENO}, respectively. \cblue{ To the best of our knowledge, so far, no results are  available on second- or high-order schemes for the multi-dimensional case.}
\par
It is the purpose of this work to propose a fully-discrete  second-order scheme  for systems of multi-dimensional non-local conservation laws {\cblue{ and present numerical simulations together with desirable theoretical results.}}
To derive a second-order scheme, we combine a MUSCL-type spatial reconstruction \cite{van1979towards} with a second-order strong stability preserving Runge-Kutta (RK) time-stepping method \cite{gottlieb1998otalVD,gottliebsigalshu2001} . As a key contribution of this work, we show that the resulting scheme satisfies the  positivity-preserving property.   \cblue{ This property is particularly important in models such as those of crowd dynamics, where the unknowns represent densities of different species and must remain non-negative.}  Additionally, we establish that the numerical solutions obtained from the proposed second-order scheme are $\mathrm{L}^{\infty}$- stable. These analytical results are validated through numerical examples. We also examine the numerical convergence of the second-order scheme using numerical experiments and highlight its significance. \cblue{Furthermore, the asymptotic compatibility of the proposed scheme is numerically investigated in the context of the singular limit problem (see \cite{colombocrippa2023,colombocrippa2019,kuang2024}), as the non-local horizon parameter tends to zero.} We are also interested in the theoretical convergence of the second-order scheme, for which the main ingredient is the bounded variation (BV) estimates. We note that, for the case of local conservation laws as well, no BV estimates are available; instead, convergence is established in \cite{Coquel1991} through weak BV estimates for measure-valued solutions. Given the difficulties associated with obtaining BV estimates, we aim to further investigate in this direction in a forthcoming paper. 

{\cblue {The rest of this paper is organized as follows. In Section \ref{sec:equations} we outline the non-local system of conservation laws under consideration.  Section \ref{section:foschem} describes a first-order finite volume scheme using the Lax-Friedrichs type numerical fluxes and details the discretization of the convolution terms. In Section \ref{section:soscheme}, we present the second-order numerical scheme. The positivity-preserving property of the proposed scheme is established in Section \ref{section:positivity}. In Section \ref{section:Linfinty}, the $\mathrm{L}^\infty$-  stability of  the second-order scheme is proven.  Numerical examples are given in Section \ref{section:numericalexp}}, to illustrate the performance of the proposed second-order scheme. The conclusions are summarized in Section \ref{section:conclusion}.}

\section{System of non-local conservation laws}\label{sec:equations}
We consider the system of non-local conservation laws in $n$ \cblue{space} dimensions \cblue{studied} in \cite{aggarwal2015}: 
\begin{equation}\label{eq:system}
    \partial_t \boldsymbol{\rho} + \nabla_{\boldsymbol{x}}\cdot \boldsymbol{F}(t,\boldsymbol{x},\boldsymbol{\rho},\boldsymbol{\eta}_{1} \ast \boldsymbol{\rho}, \cdots, \boldsymbol{\eta}_{n} \ast \boldsymbol{\rho}) = 0,
\end{equation}
where $ \boldsymbol{x} := \left(x_{1}, x_{2}, \cdots, x_{n}\right)$ and the unknown is 
\begin{align*}    \boldsymbol{\rho} &:= \left(\rho^{1}, \rho^{2}, \cdots, \rho^{N}\right),
\end{align*}

and for each fixed $r \in \{1,2, \dots, n\},$ the convolution kernel corresponding to the $r$-th dimension is given by the $m \times N$ matrix
\begin{align*}
\boldsymbol{\eta}_{r} &:= \begin{pmatrix}
\eta_{r}^{1,1} & \cdots & \eta^{1,N}_{r} \\
\vdots & \ddots & \vdots \\
\eta^{m,1}_{r} & \cdots & \eta^{m,N}_{r}
\end{pmatrix},
\end{align*}
where   $\eta^{l,k}_{r}:\mathbb{R}^{n} \to \mathbb{R}.$ 
Also,  \eqref{eq:system} is posed along  with the initial condition 
\begin{align}\label{eq:ic_system}
    \bs \rho(\bs x,0) = \bs \rho_0(\bs x).
\end{align}
For the sake of simplicity of the exposition, we restrict our attention to the case of systems of non-local conservation laws in two dimensions, i.e, $n=2$ and  $\boldsymbol{x} = (x,y).$ Note that all the results in this case can be easily extended to the case of general $n$-dimensional systems.
The convolution kernel functions corresponding to the $x$-and $y$-direction are then given  by \cblue{the matrices}
\begin{align*}\boldsymbol{\eta}&:=\boldsymbol{\eta}_{1} =  \begin{pmatrix}
\eta^{1,1} & \cdots & \eta^{1,N} \\
\vdots & \ddots & \vdots \\
\eta^{m,1} & \cdots & \eta^{m,N}
\end{pmatrix} \quad \mbox{and} \quad   \boldsymbol{\nu}:=\boldsymbol{\eta}_{2} = \begin{pmatrix}
\nu^{1,1} & \cdots & \nu^{1,N} \\
\vdots & \ddots & \vdots \\
\nu^{m,1} & \cdots & \nu^{m,N}
\end{pmatrix},
\end{align*}
respectively, and the flux function takes the form
\begin{align*}
    \boldsymbol{F}(t,\boldsymbol{x},\boldsymbol{\rho},\boldsymbol{\eta} \ast \boldsymbol{\rho}, \boldsymbol{\nu} \ast \boldsymbol{\rho})&:=   \begin{pmatrix}
f^{1}(t,x,y,\rho^{1},\boldsymbol{\eta} \ast \boldsymbol{\rho})& g^{1}(t,x,y,\rho^{1}, \boldsymbol{\nu} \ast \boldsymbol{\rho}) \\
\vdots & \vdots \\
f^{N}(t,x,y,\rho^{N},\boldsymbol{\eta} \ast \boldsymbol{\rho}) & g^{N}(t,x,y,\rho^{N}, \boldsymbol{\nu} \ast \boldsymbol{\rho})
\end{pmatrix}^{T}.
\end{align*}
For $k \in \{1, \dots, N \},$ we  now focus on  the problem  associated with the $k$-th unknown $\rho^{k}$ of \eqref{eq:system}, given by 
\begin{nalign}\label{2d-nonlocalgeneraleqn}
    \partial_{t}\rho^{k} + \partial_{x}f^{k}+ \partial_{y}g^{k} &= 0, \,\, t>0,\,\, (x, y) \in \mathbb{R}^{2},\\
    \boldsymbol{\rho}(0, x, y) &= \left(\rho_{0}^{k}(x, y)\right)_{k=1}^{N}, \, (x, y) \in \mathbb{R}^{2},
\end{nalign}
where $f^{k}=f^{k}(t, x, y,\rho^{k}, \boldsymbol{\eta}*\boldsymbol{\rho}),\, g^{k}=g^{k}(t, x, y, \rho^{k}, \boldsymbol{\nu}*\boldsymbol{\rho})$ 
and the convolution terms  are defined as
\begin{align*}
\left(\boldsymbol{\eta}*\boldsymbol{\rho}\right)_{q}(t,x,y) &\coloneqq \sum_{k=1}^{N}\int\int_{\mathbb{R}^{2}} \eta^{q,k}(x- x^{\prime}, y- y^{\prime}) \rho^{k}(t, x^{\prime}, y^{\prime}) \dif x^{\prime} \dif y^{\prime}, \\
\left(\boldsymbol{\nu}*\boldsymbol{\rho}\right)_{q}(t,x,y) &\coloneqq \sum_{k=1}^{N}\int\int_{\mathbb{R}^{2}} \nu^{q,k}(x- x^{\prime}, y- y^{\prime}) \rho^{k}(t, x^{\prime}, y^{\prime}) \dif x^{\prime} \dif y^{\prime},
\end{align*}
for $q \in \{1,2, \dots, m\}.$
\subsection{Notations}
\cblue{In what follows, we denote $\mathbb{R}_+:=[0,\infty), $  $\mathbb{R}_+^N:=[0,\infty)^N$ and $\norm{\cdot} :=\norm{\cdot}_{\mathrm{L}^{\infty} }.$ For a vector-valued quantity $\boldsymbol{\rho}: \mathbb{R}^2 \to \mathbb{R}^N$, we define $\displaystyle \norm{\bs\rho} := \max_{k \in \{1,2, \dots,N\}}\norm{\rho^k}$ and $\displaystyle \norm{\boldsymbol{\rho}}_{\mathrm{L}^1}:= \sum_{k=1}^N \norm{\rho^k}_{\mathrm{L}^1}.$ Also, for a matrix-valued quantity $\boldsymbol{\eta}: \mathbb{R}^2 \to \mathbb{R}^{m \times N},$  we define  $\displaystyle\norm{\partial_x \boldsymbol{\eta}}:= \max_{q,k}\norm{\partial_x \eta^{q,k}}$ and $\displaystyle \norm{\partial_y \boldsymbol{\eta}}:= \max_{q,k}\norm{\partial_y \eta^{q,k}}.$  Further, for any $c,d \in \mathbb{R}$, we define $\mathcal{I}(c,d):= \bigl(\min\{c,d\}, \max\{c,d\}\bigr)$ and for vectors $\boldsymbol{A_1}, \boldsymbol{A_2} \in \mathbb{R}^m,$ $\mathcal{I}(\boldsymbol{A_1},\boldsymbol{A_2}):= \{\kappa\boldsymbol{A_1}+(1-\kappa)\boldsymbol{A_2} \,| \,       \kappa \in (0,1)\}.$} 
\subsection{Hypotheses}
In this work, the non-local problem \eqref{eq:system}, \eqref{eq:ic_system} is studied under the following hypotheses:\\
\textbf{(\hypertarget{H0}{H0})} For all $t \in \mathbb{R}_{+}, (x, y) \in \mathbb{R}^{2}$ and $\boldsymbol{A}, \boldsymbol{B} \in \mathbb{R}^{m},$
\begin{enumerate}
    \item $f^{k}, g^{k} \in \textrm{C}^{2}(\mathbb{R}_{+} \times \mathbb{R}^{2} \times \mathbb{R} \times \mathbb{R}^{m}; \mathbb{R}),\quad$
    $\partial_{\rho}f^{k}, \partial_{\rho}g^{k} \in \mathrm{L}^{\infty}(\mathbb{R}_{+} \times \mathbb{R}^{2} \times \mathbb{R} \times \mathbb{R}^{m}; \mathbb{R})$ \,\, and
    \item $f^{k}(t, x, y, 0, \boldsymbol{A}) = g^{k}(t, x, y, 0, \boldsymbol{B})= 0,$ 
\end{enumerate} for $k \in \{1,2, \dots, N\}.$\\
\textbf{(\hypertarget{H1}{H1})}  There exists an $M>0$ such that for all $t, x, y, \rho, \boldsymbol{A} \, \mbox{and}\, \boldsymbol{B}$ in the respective domains
\begin{align*}
    &\lvert \partial_{x}f^{k}  \rvert,  \norm{\nabla_{A}f^{k}} \leq M\lvert \rho \rvert \quad \mbox{and} \quad \lvert \partial_{y}g^{k}  \rvert,  \norm{\nabla_{B}g^{k}}   \leq M\lvert \rho \rvert \quad \, \mbox{for} \, k \in \{1,2, \dots, N\}.
\end{align*}
\textbf{(\hypertarget{H2}{H2})} $ \boldsymbol{\eta}, \boldsymbol{\nu} \in (\mathrm{C}^{2}\cap \cblue{\mathrm{W}^{1, \infty})}(\mathbb{R}^{2}; \mathbb{R}^{m\times N}).$

We note that, under these assumptions, along with some additional hypotheses and suitable CFL conditions, the existence of a weak solution to problem \eqref{eq:system}, \eqref{eq:ic_system} was proved in \cite{aggarwal2015} through the convergence of a first-order numerical scheme employing a Lax-Friedrichs type numerical flux (see Theorem 2.3, \cite{aggarwal2015}).

\section{First-order scheme}\label{section:foschem}
In this section, we describe the construction of a first-order finite volume scheme with Lax-Friedrichs type numerical flux to approximate \eqref{2d-nonlocalgeneraleqn}. We discretize the spatial domain into Cartesian grids with mesh sizes $\Delta x$ and $\Delta y$ in the $x$ and $y$ directions, respectively, as follows
\begin{align*}
    x_{i}&:= i\Delta x, \, y_{j}:= j\Delta y, \,
    x_{\iph}:= (\iph)\Delta x \,\, \mbox{and} \,\, y_{\jph}:= (\jph)\Delta y, \,\, \forall \, i,j \in \mathbb{Z}. 
\end{align*}
Now, the discretization of the spatial domain is given by $\mathbb{R}^2 = \bigcup_{i,j \in \mathbb{Z}} [x_{i-\frac{1}{2}}, x_{i+\frac{1}{2}})\\ \times [y_{j-\frac{1}{2}}, y_{j+\frac{1}{2}}),$  where $x_{i+\frac{1}{2}}- x_{i-\frac{1}{2}}= \Delta x$ and $y_{j+\frac{1}{2}}- y_{j-\frac{1}{2}}= \Delta y.$ The time domain is discretized using a time-step $\Delta t$ and we denote $t^{n}= n\Delta t \ \mbox{for} \ n \in \mathbb{N}.$ We also denote $\displaystyle \lambda_{x}:= \frac{\Delta t}{\Delta x}\, \, \mbox{and} \,\, \lambda_{y}:= \frac{\Delta t}{\Delta y}.$ The initial datum $\boldsymbol{\rho}_{0}$ is discretized as
\begin{align*}
    \rho_{ij}^{k,0}= \frac{1}{\Delta x \Delta y} \int_{x_{i-\frac{1}{2}}}^{x_{i+\frac{1}{2}}}\int_{y_{j-\frac{1}{2}}}^{y_{j+\frac{1}{2}}}\rho_{0}^{k}(x, y) \dif x \dif y \quad \mbox{for} \ i,j \in \mathbb{Z}.
\end{align*}
A first-order finite volume approximation for \eqref{2d-nonlocalgeneraleqn} can be written as
\begin{nalign}\label{eq:foscheme}
    \rho^{k,n+1}_{ij} &= \rho^{k,n}_{ij} - \lambda_{x}\left[F^{k,n}_{i+\frac{1}{2}, j}(\rho_{i,j}^{k,n}, \rho_{i+1,j}^{k,n}) - F^{k,n}_{i-\frac{1}{2}, j}(\rho_{i-1,j}^{k,n}, \rho_{i,j}^{k,n})\right]\\
    &\spc - \lambda_{y} \left[G^{k,n}_{i, j+\frac{1}{2}}(\rho_{i,j}^{k,n}, \rho_{i, j+1}^{k,n}) - G^{k,n}_{i, j-\frac{1}{2}}(\rho_{i,j-1}^{k,n}, \rho_{i, j}^{k,n})\right],
\end{nalign} {\cblue{where the cell-interface numerical fluxes are defined using the Lax-Friedrichs flux, as discussed in }} \cite{aggarwal2015}:
\begin{nalign}\label{LxF-2Dflux}
F^{k,n}_{i+\frac{1}{2}, j}(u, v) := \frac{f^{k,n}_{i+\frac{1}{2}, j}(u) + f^{k,n}_{i+\frac{1}{2}, j}(v)}{2} - \frac{\alpha(v-u)}{2 \lambda_{x}},\\
G^{k,n}_{i, j+\frac{1}{2}}(u, v) := \frac{g^{k,n}_{i, j+\frac{1}{2}}(u) + g^{k,n}_{i, j+\frac{1}{2}}(v)}{2} - \frac{\beta(v-u)}{2 \lambda_{y}},
\end{nalign}
for fixed $\alpha, \beta$ \cblue{which will be specified later (see Remark \ref{rem:alphabeta})}, where
\begin{align*}
    f^{k,n}_{i+\frac{1}{2}, j}(\rho) \coloneqq f^{k}(t^{n}, x_{i+\frac{1}{2}},y_{j},\rho, \boldsymbol{A}^{n}_{i+\frac{1}{2}, j}), \,\,
    g^{k,n}_{i, j+\frac{1}{2}}(\rho) \coloneqq g^{k}(t^{n}, x_{i},y_{j+\frac{1}{2}},\rho, \boldsymbol{B}^{n}_{i, j+\frac{1}{2}}).
\end{align*}
\par
Here, the terms $\boldsymbol{A}^{n}_{i+\frac{1}{2}, j}:= \left(A^{q,n}_{i+\frac{1}{2}, j}\right)_{q=1}^{m}$ and $\boldsymbol{B}^{n}_{i, j+\frac{1}{2}} := \left(B^{q,n}_{i, j+\frac{1}{2}}\right)_{q=1}^{m}$ are approximations of the convolution terms in the sense that for $q=1,2, \dots, m,$ $ A^{q,n}_{i+\frac{1}{2}, j} \approx \left(\boldsymbol{\rho}*\boldsymbol{\eta}\right)_{q}(t^{n},x_{i+\frac{1}{2}}, y_{j})$ and $B^{q,n}_{i, j+\frac{1}{2}} \approx (\boldsymbol{\rho}*\boldsymbol{\nu})_{q}(t^{n},x_{i}, y_{j+\frac{1}{2}}).$ These approximations are derived using the midpoint quadrature rule as described below:
\begin{nalign}\label{eq:A}
&\left(\boldsymbol{\rho}*\boldsymbol{\eta}\right)_{q}(t^{n},x_{i+\frac{1}{2}}, y_{j}) \\ & \spc = \sum_{k=1}^{N} \int\int_{\mathbb{R}^{2}} \eta^{q,k}(x_{i+\frac{1}{2}}- x^{\prime}, y_{j}- y^{\prime}) \rho^{k}(t, x^{\prime}, y^{\prime}) \dif x^{\prime} \dif y^{\prime} \\
& \spc = \sum_{k=1}^{N}\sum_{l,p \in \mathbb{Z}}\int_{x_{l-\frac{1}{2}}}^{x_{l+\frac{1}{2}}}\int_{y_{p-\frac{1}{2}}}^{y_{p+\frac{1}{2}}} \eta^{q,k}(x_{\iph}- x^{\prime}, y_{j}- y^{\prime}) \rho^{k}(t, x^{\prime}, y^{\prime}) \dif x^{\prime} \dif y^{\prime} \\
& \spc \approx
\Delta x \Delta y  \sum_{k=1}^{N}\left[\sum_{p,l \in \mathbb{Z}}\eta^{q,k}(x_{i+\frac{1}{2}}-x_{l}, y_{j}-y_{p})        \ \rho^{k,n}_{l, p}\right]\\
&\spc = 
\Delta x \Delta y  \sum_{k=1}^{N}\left[\sum_{p,l \in \mathbb{Z}}\eta^{q,k}_{i+\frac{1}{2}-l, j-p}        \ \rho^{k,n}_{l, p}\right] =: A^{q,n}_{i+\frac{1}{2}, j} 
\end{nalign}
and 
\begin{align*}
&(\boldsymbol{\rho}*\boldsymbol{\nu})_{q}(t^{n},x_{i}, y_{j+\frac{1}{2}}) \\& \spc = \sum_{k=1}^{N} \int\int_{\mathbb{R}^{2}} \nu^{q,k}(x_{i}- x^{\prime}, y_{j+\frac{1}{2}}- y^{\prime}) \rho^{k}(t^{n}, x^{\prime}, y^{\prime}) \dif x^{\prime} \dif y^{\prime} \\
& \spc =  \sum_{k=1}^{N} \sum_{l,p \in \mathbb{Z}}\int_{x_{l-\frac{1}{2}}}^{x_{l+\frac{1}{2}}}\int_{y_{p-\frac{1}{2}}}^{y_{p+\frac{1}{2}}} \nu^{q,k}(x_{i}- x^{\prime}, y_{j+\frac{1}{2}}- y^{\prime}) \rho^{k}(t^{n}, x^{\prime}, y^{\prime}) \dif x^{\prime} \dif y^{\prime} \\
& \spc \approx
 \Delta x \Delta y  \sum_{k=1}^{N}\left[\sum_{l,p \in \mathbb{Z}}\nu^{q,k}_{i-l, j+\frac{1}{2}-p} \ \rho^{k,n}_{l, p}\right] =: B^{q,n}_{i, j+\frac{1}{2}},
\end{align*}
with the notation
$\eta^{q,k}_{i+\frac{1}{2}, j} := \eta^{q,k}(x_{i+\frac{1}{2}}, y_{j}) \ \mbox{and} \ \nu^{q,k}_{i, j+\frac{1}{2}} := \nu^{q,k}(x_{i}, y_{j+\frac{1}{2}}).$
Finally, the approximate solution is given by the  piecewise constant function $\boldsymbol{\rho}_{\Delta} := \left(\rho^{1}_\Delta, \rho^{2}_\Delta, \cdots, \rho^{N}_\Delta\right),$ where $\rho_{\Delta}^{k}(t, x, y)$ is defined by 
\begin{align*}
    \rho_{\Delta}^{k}(t, x, y) = \rho_{ij}^{k,n} \,\, \mbox{for} \,\, (t,x,y) \in [t^{n}, t^{n+1}) \times [x_{i-\frac{1}{2}}, x_{i+\frac{1}{2}})\times [y_{j-\frac{1}{2}}, y_{\jph}),
\end{align*} $\mbox{for} \ n \in \mathbb{N}, \,\, \ i,j \in \mathbb{Z}$ and $k \in \{1,2, \dots, N\}.$

\cblue{\begin{remark}
The convergence of the first-order scheme \eqref{eq:foscheme} can be established using arguments similar to those for dimensionally split first-order schemes in \cite{aggarwal2015}.
\end{remark}}
\section{Second-order scheme}\label{section:soscheme}
To develop a second-order scheme, we adhere to the fundamental principle of utilizing a spatial linear reconstruction and a Runge-Kutta time-stepping method. Specifically, we employ a two-stage Runge-Kutta method, where in each step,  a piecewise linear polynomial is reconstructed  within each cell using slopes obtained from the minmod limiter in each direction. Additionally, the reconstructed piecewise polynomial is formulated to preserve the cell average in each cell. 
To begin with,  we describe the reconstruction procedure at the time level $t^n,$ where we write the piecewise linear polynomial in each cell as $$\tilde{\rho}^{k,n}_{\Delta}(x,y):= ax+by+c, \quad \mbox{for} \quad (x,y) \in [x_{\imh}, x_{\iph})\times [y_{\jmh}, y_{\jph}),$$  where $a,b$ and $c$ are constants.
Given that $\tilde{\rho}^{k,n}_\Delta$ preserves the cell averages,  we obtain
\begin{nalign}\label{eq:recon_stage1}
    \tilde{\rho}^{k,n}_{\Delta}(x,y)
    = \rho_{ij}^{k,n} + a(x-x_{i})+ b(y-y_{j}),
\end{nalign} where $a=\partial_x\tilde{\rho}^{k,n}_{\Delta}(x_i,y_j)$ and $b= \partial_{y}\tilde{\rho}^{k,n}_{\Delta}(x_i,y_j).$ The slopes  are determined using the minmod slope-limiter as $\Delta x\partial_x\tilde{\rho}^{k,n}_{\Delta}(x_i,y_j) =  \sigma^{x,k,n}_{ij},$ $\Delta y\partial_y\tilde{\rho}^{k,n}_{\Delta}(x_i,y_j) = \sigma^{y,k,n}_{ij},$ where  
\begin{nalign}\label{eq:slopes}
     \sigma^{x,k,n}_{ij} &:= 2 \theta \textrm{minmod}\left((\rho^{k,n}_{i,j}-\rho^{k,n}_{i-1,j}), \  \frac{1}{2}(\rho^{k,n}_{i+1, j}-\rho^{k,n}_{i-1,j}), \  (\rho^{k,n}_{i+1,j}-\rho^{k,n}_{i,j})\right), \\
    \sigma^{y,k,n}_{ij} &:= 2 \theta \textrm{minmod}\left((\rho^{k,n}_{i,j}-\rho^{k,n}_{i, j-1}), \  \frac{1}{2}(\rho^{k,n}_{i,j+1}-\rho^{k,n}_{i,j-1}), \  (\rho^{k,n}_{i,j+1}-\rho^{k,n}_{i,j})\right),
\end{nalign} 
$ \mbox{for} \ \theta \in [0,1],$ where the minmod function is  defined by
\begin{align*}
\textrm{minmod}(a_{1}, \cdots , a_{m}) \coloneqq \begin{cases}\mathrm{sgn}(a_{1}) \min\limits_{1 \leq k \leq  m}\{\lvert a_{k}\rvert\} \hspace{0.55cm} \mbox{if} \ \mathrm{sgn}(a_{1}) = \cdots = \mathrm{sgn}(a_{m}),\\
0 \hspace{3.45cm}\mbox{otherwise.}
\end{cases}
\end{align*}
The face values of the reconstructed polynomial in the $x$-direction are given by
\begin{nalign}\label{eq:facevalues_x}
\rho_{i+\frac{1}{2},j}^{k,n,-}= \rho^{k,n}_{i,j}+\frac{\sigma_{i,j}^{x,k,n}}{2}, \ \ \ \rho_{i-\frac{1}{2}, j}^{k,n,+}= \rho^{k,n}_{i,j}-\frac{\sigma_{i,j}^{x,k,n}}{2}.
\end{nalign}
Similarly, the face values  in the $y$-direction are given by
\begin{nalign}\label{eq:facevalues_y}
 \rho_{i,j+\frac{1}{2}}^{k,n,-}= \rho^{k,n}_{i,j}+\frac{\sigma_{i,j}^{y,k,n}}{2}, \ \ \ \rho_{i, j-\frac{1}{2}}^{k,n,+}= \rho^{k,n}_{i,j}-\frac{\sigma_{i,j}^{y,k,n}}{2},
\end{nalign}
where, within each cell, the superscripts $+$ and $-$  indicate the left (bottom) and right (top) interfaces, respectively.
\par
Given the cell-averaged solutions $\rho_{\Delta}^{k,n}, \, k \in \{1,2,\dots,N\}$ at the time stage $t^n$, the fully discrete scheme involves two stages of the Runge-Kutta method \cite{gottlieb1998otalVD,shu1988} to compute the solution at the time level $t^{n+1}.$ This is  described as follows.\\
{\bf Step 1:} Define
\begin{nalign}\label{rk-2d-1stEulerstep}
    \rho^{k,(1)}_{ij} &= \rho^{k,n}_{ij} - \lambda_{x}\left[F^{k,n}_{i+\frac{1}{2}, j}(\rho_{i+\frac{1}{2},j}^{k,n,-}, \rho_{i+\frac{1}{2},j}^{k,n, +}) - F^{k,n}_{i-\frac{1}{2}, j}(\rho_{i-\frac{1}{2},j}^{k,n,-}, \rho_{i-\frac{1}{2},j}^{k,n,+})\right] \\
    &\spc -\lambda_{y}\left[G^{k,n}_{i, j+\frac{1}{2}}(\rho_{i,j+\frac{1}{2}}^{k,n, -}\rho_{i,j+\frac{1}{2}}^{n, +}) - G^{k,n}_{i, j-\frac{1}{2}}(\rho_{i,j-\frac{1}{2}}^{k,n,-}, \rho_{i,j-\frac{1}{2}}^{k,n,+})\right],
\end{nalign} $\mbox{for each }\;i,\; j\in \mathbb{Z},$ \cblue{where the numerical fluxes $F$ and $G$ are as defined in \eqref{LxF-2Dflux}, with $ \alpha, \beta \in (0,\frac{1}{3(1+\theta)})$.}
Next, reconstruct the piecewise linear polynomial from the values $\rho^{k,(1)}_{ij}$ as in \eqref{eq:recon_stage1} and compute the face values $\rho^{k,(1),\pm}_{\iph,j}$ and $\rho^{k,(1),\pm}_{i,\jph}$ following \eqref{eq:facevalues_x} and \eqref{eq:facevalues_y}.  \\
{\bf Step 2:} Define 
\begin{nalign}\label{rk-2d-2stEulerstep}
    \rho^{k,(2)}_{ij} &= \rho^{k,(1)}_{ij} - \lambda_{x}\left[F^{k,(1)}_{i+\frac{1}{2}, j}(\rho_{i+\frac{1}{2},j}^{k,(1),-}, \rho_{i+\frac{1}{2},j}^{k,(1), +}) - F^{k,(1)}_{i-\frac{1}{2}, j}(\rho_{i-\frac{1}{2},j}^{k,(1),-}, \rho_{i-\frac{1}{2},j}^{k,(1),+})\right]\\
    &\spc-\lambda_{y}\left[G^{k,(1)}_{i, j+\frac{1}{2}}(\rho_{i,j+\frac{1}{2}}^{k,(1), -}\rho_{i,j+\frac{1}{2}}^{k,(1),+}) - G^{k,(1)}_{i, j-\frac{1}{2}}(\rho_{i,j-\frac{1}{2}}^{k,(1),-}, \rho_{i,j-\frac{1}{2}}^{k,(1),+})\right], 
\end{nalign} 
$\mbox{for each }\;i,\; j\in \mathbb{Z}.$ Finally, the solution at the $(n+1)$-th time-level is now computed as
\begin{align}\label{RK-2d}
\rho_{ij}^{k,n+1}=\frac{\rho^{k,n}_{ij}+\rho^{k,(2)}_{ij}}{2}
\end{align} and  for $k \in \{1,2, \dots, N\},$ we write the approximate solution corresponding to the second-order scheme \eqref{RK-2d} as \begin{align*}
    \rho_{\Delta}^{k}(t, x, y) = \rho_{ij}^{k,n} \quad \mbox{for} \quad (t,x,y) \in [t^{n}, t^{n+1}) \times [x_{i-\frac{1}{2}}, x_{i+\frac{1}{2}})\times [y_{j-\frac{1}{2}}, y_{\jph})
\end{align*} 
$\mbox{for} \ n \in \mathbb{N} \ \mbox{and} \ i,j \in \mathbb{Z}.$

\begin{remark}
     In the slope limiter \eqref{eq:slopes}, $\theta = 0$ corresponds to first-order spatial accuracy, while $\theta = 0.5$ recovers the standard minmod limiter, achieving second-order spatial accuracy.
\end{remark}

\section{Positivity-preserving property}\label{section:positivity}
We now show that the second-order scheme given by \eqref{RK-2d} admits a positivity-preserving property, i.e., for $n\in \mathbb{N}\cup \{0\},$ $\rho_{ij}^{k,n+1} \ge 0$ whenever $\rho_{ij}^{k,n} \ge 0.$ To begin with, we write the Euler forward step \eqref{rk-2d-1stEulerstep}
as the average
\begin{align}\label{eq:euler_as_v+w}
\rho^{k,(1)}_{ij} =\frac{V_{ij}^{k,(1)}+W^{k,(1)}_{ij}}{2},
\end{align} 
where
\begin{align}\label{dimsplitx-1}
V_{ij}^{k,(1)}& := \rho^{k,n}_{ij} - 2\lambda_{x}\left[F^{k,n}_{i+\frac{1}{2}, j}(\rho_{i+\frac{1}{2},j}^{k,n,-}, \rho_{i+\frac{1}{2},j}^{k, n, +}) - F^{k,n}_{i-\frac{1}{2}, j}(\rho_{i-\frac{1}{2},j}^{k,n,-}, \rho_{i-\frac{1}{2},j}^{k,n,+})\right], 
\end{align}
and
\begin{align*}
W_{ij}^{k,(1)}&:= \rho^{k,n}_{ij}-2\lambda_{y}\left[G^{k,n}_{i, j+\frac{1}{2}}(\rho_{i,j+\frac{1}{2}}^{k,n, -}\rho_{i,j+\frac{1}{2}}^{k,n, +}) - G^{k,n}_{i, j-\frac{1}{2}}(\rho_{i,j-\frac{1}{2}}^{k,n,-}, \rho_{i,j-\frac{1}{2}}^{k,n,+})\right]. 
\end{align*}
\cblue{Also, we note a useful property of the minmod reconstruction in the following remark.}

    \begin{remark}\label{remark:2}
    For given $k$ and $n,$ if $\rho^{k,n}_{i,j}\ge 0$ $\forall\,\, i,j \in \mathbb{Z}$ then it follows that $|\rho_{i+\frac{1}{2},j}^{k,n,-}-\rho_{i-\frac{1}{2},j}^{k,n,+}|\le 2\theta \rho_{ij}^{k,n}.$ This can be verified in the following lines.
     From the definition of slopes in \eqref{eq:slopes}, we obtain
\begin{align*}
    0 \le \frac{(\rho_{i+\frac{1}{2},j}^{k,n,-}-\rho_{i-\frac{1}{2},j}^{k,n,+})}{\rho_{i,j}^{k,n}-\rho_{i-1,j}^{k,n}},\;\;    \frac{(\rho_{i+\frac{1}{2},j}^{k,n,-}-\rho_{i-\frac{1}{2},j}^{k,n,+})}{\rho_{i+1,j}^{k,n}-\rho_{i,j}^{k,n}}  \le 2 \theta.  
\end{align*}
\par
Additionally, we observe that either $\rho_{i-1,j}^{k,n} <\rho_{i,j}^{k,n}$ or $\rho_{i+1j}^{k,n} <\rho_{i,j}^{k,n}$ provided $\abs{\rho_{i+\frac{1}{2},j}^{k,n,-}-\rho_{i-\frac{1}{2},j}^{k,n,+}} \neq 0.$
Splitting this in to two cases  and using the assumption $\rho_{i,j}^{k,n} \ge 0,$ we obtain\\
Case 1: If $\rho_{i,j}^{k,n} > \rho_{i+1,j}^{k,n}$ then 
$$\abs{\rho_{i+\frac{1}{2},j}^{k,n,-}-\rho_{i-\frac{1}{2},j}^{k,n,+}}=\frac{(\rho_{i+\frac{1}{2},j}^{k,n,-}-\rho_{i-\frac{1}{2},j}^{k,n,+})}{(\rho_{i+1,j}^{k,n}-\rho_{i,j}^{k,n})}|\rho_{i+1,j}^{k,n}-\rho_{i,j}^{k,n}| \le 2 \theta |\rho_{i+1,j}^{k,n}-\rho_{i,j}^{k,n}| \le 2\theta \rho_{ij}^{k,n}.$$
Case 2: If $\rho_{i,j}^{k,n} > \rho_{i-1,j}^{k,n}$  then 
$$\abs{\rho_{i+\frac{1}{2},j}^{k,n,-}-\rho_{i-\frac{1}{2},j}^{k,n,+}}=\frac{(\rho_{i+\frac{1}{2},j}^{k,n,-}-\rho_{i-\frac{1}{2},j}^{k,n,+})}{(\rho_{i,j}^{k,n}-\rho_{i-1,j}^{k,n})}|\rho_{i,j}^{k,n}-\rho_{i-1,j}^{k,n}| \le 2 \theta|\rho_{i,j}^{k,n}-\rho_{i-1,j}^{k,n}| \le 2\theta \rho_{ij}^{k,n}. $$ 
This shows that 
$|\rho_{i+\frac{1}{2},j}^{k,n,-}-\rho_{i-\frac{1}{2},j}^{k,n,+}|\le 2\theta \rho_{ij}^{k,n}.$
\end{remark}

\begin{theorem}\label{thm:positivity}
Assume that the hypotheses \hyperlink{H0}{\textbf{(H0)}}, \hyperlink{H1}{\textbf{(H1)}} and \hyperlink{H2}{\textbf{(H2)}} hold and for all $k \in \{1,2, \dots, N\}$ the time-step $\Delta t$ satisfies the following CFL conditions
\begin{nalign}\label{cfl-2D}
   \bar{\lambda}_{x} &\leq \frac{\min\{1, 4- 6\bar{\alpha} (1+\theta), 6\bar{\alpha}\}}{\bigl(6(1+\theta)\lVert\partial_{\rho}f^{k} \rVert  +1\bigr)} , \quad  \bar{\lambda}_{y} \le \frac{\min\{1, 4- 6\bar{\beta} (1+\theta), 6\bar{\beta}\}}{\bigl(6(1+\theta)\lVert\partial_{\rho}g^{k} \rVert  +1\bigr)},
 \end{nalign}
where $\bar{\alpha}:=2\alpha,\;\,\bar{\beta}:=2 \beta,\,\,  \bar{\lambda}_x:=2 \lambda_{x}, \,\,\bar{\lambda}_y:=2 \lambda_{y}$ and the parameter $\theta \in [0,1]$ is as defined in the minmod \cblue{slope-limiter \eqref{eq:slopes}.}
Additionally,  assume that the mesh sizes are sufficiently small so that $\displaystyle \Delta x, \Delta y \leq \frac{1}{3M}$ where $M$ is as in \hyperlink{H1}{\textbf{(H1)}}. If the initial datum $\boldsymbol{\rho}_{0}$ is such that $\boldsymbol{\rho}_{0} \in \mathrm{L}^{1} \cap \mathrm{L}^{\infty}(\mathbb{R}^{2}; \mathbb{R}_{+}^{N})$, then the approximate solution $\boldsymbol{\rho}_{\Delta}$ given by the second-order scheme (\ref{RK-2d}) satisfies $\rho_{\Delta}^{k}(t, x, y) \geq 0$ for all $k \in \{1,2, \dots, N\},$  $t \in \mathbb{R}_{+}$ and $(x, y) \in \mathbb{R}^{2}$. 
\end{theorem}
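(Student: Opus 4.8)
The plan is to exploit the strong-stability-preserving structure of the two-stage update \eqref{RK-2d} and reduce the claim to a single-stage positivity statement. Since $\rho^{k,n+1}_{ij} = \tfrac{1}{2}\bigl(\rho^{k,n}_{ij} + \rho^{k,(2)}_{ij}\bigr)$ is a convex combination, and since both stages \eqref{rk-2d-1stEulerstep}, \eqref{rk-2d-2stEulerstep} are forward-Euler maps built from the same fluxes, slopes and CFL, it suffices to prove one lemma: the forward-Euler step sends non-negative cell data to non-negative cell data. Granting this, induction over the stages gives $\rho^{k,n}\ge 0 \Rightarrow \rho^{k,(1)}\ge0\Rightarrow\rho^{k,(2)}\ge0$, whence $\rho^{k,n+1}\ge0$, and the piecewise-constant $\rho^k_\Delta$ inherits non-negativity. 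I would stress that the single-stage lemma must be stated for \emph{arbitrary} non-negative data, not only for $\rho^{k,n}$, so that it may be invoked at both stages, where the reconstructed slopes and the convolution values $A, B$ differ.

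For the single stage I would use the dimensional splitting recorded in \eqref{eq:euler_as_v+w}, $\rho^{k,(1)}_{ij} = \tfrac{1}{2}\bigl(V^{k,(1)}_{ij} + W^{k,(1)}_{ij}\bigr)$, where $V$ and $W$ are the pure $x$- and $y$-direction half-updates carrying the doubled ratios $\bar{\lambda}_x = 2\lambda_x$, $\bar{\lambda}_y = 2\lambda_y$; this is exactly why the CFL \eqref{cfl-2D} is phrased in the barred quantities. By symmetry it is enough to show $V^{k,(1)}_{ij}\ge0$, the case of $W$ being identical with $g^k, \bar{\beta}, \bar{\lambda}_y$ in place of $f^k, \bar{\alpha}, \bar{\lambda}_x$. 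Expanding the Lax--Friedrichs flux \eqref{LxF-2Dflux} in \eqref{dimsplitx-1} and inserting the face values \eqref{eq:facevalues_x}, I would rewrite $V^{k,(1)}_{ij}$ as an explicit affine expression in $\rho^{k,n}_{i-1,j}, \rho^{k,n}_{ij}, \rho^{k,n}_{i+1,j}$ and the slopes $\sigma^{x,k,n}_{i-1,j}, \sigma^{x,k,n}_{ij}, \sigma^{x,k,n}_{i+1,j}$. The decisive input is \hyperlink{H0}{\textbf{(H0)}}(2): since $f^k(t,x,y,0,A)=0$, each evaluation factors as $f^{k,n}_{i\pm1/2,j}(\rho)=\rho\int_0^1 \partial_\rho f^{k,n}_{i\pm1/2,j}(s\rho)\,ds$, so that the reconstructed face values, which are non-negative by Remark \ref{remark:2} since they lie in $[(1-\theta)\rho, (1+\theta)\rho]$, are carried by multipliers of modulus at most $\lVert\partial_\rho f^k\rVert$.

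The core step is then to collect terms so that $V^{k,(1)}_{ij}$ appears as a combination of $\rho^{k,n}_{i-1,j}, \rho^{k,n}_{ij}, \rho^{k,n}_{i+1,j}$ with non-negative coefficients, controlling two families of unfavourable contributions. First, the reconstruction slopes are removed using Remark \ref{remark:2}, $|\sigma^{x,k,n}_{mj}|\le 2\theta\rho^{k,n}_{mj}$, which is what injects the factors $(1+\theta)$ into the coefficients. Second, because the two interface fluxes sit at different points $x_{i\pm1/2}$ and carry different convolution data $A^n_{i\pm1/2,j}$, comparing them produces position- and convolution-induced variations, which I would bound through \hyperlink{H1}{\textbf{(H1)}} ($|\partial_x f^k|\le M|\rho|$, $\lVert\nabla_A f^k\rVert\le M|\rho|$) together with the mesh restriction $\Delta x\le 1/(3M)$, turning the $M$-factors into the benign constants ($+1$ and the numerator) of the bound \eqref{cfl-2D}. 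Demanding that each of the three resulting coefficients be non-negative would then produce exactly the competing thresholds $\min\{1, 4-6\bar{\alpha}(1+\theta), 6\bar{\alpha}\}$ over $6(1+\theta)\lVert\partial_\rho f^k\rVert + 1$.

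The hard part will be precisely this final bookkeeping: keeping every slope term and every position/convolution variation on the correct side of the estimate so that, after the uniform bounds, all three coefficients are simultaneously non-negative under a single CFL number rather than three incompatible ones. In particular, verifying that the $\bar{\alpha}$-diffusion exactly balances the reconstruction-induced antidiffusion, reflected in the $6\bar{\alpha}$ and $4-6\bar{\alpha}(1+\theta)$ thresholds, is where the restriction $\alpha,\beta\in(0,\tfrac{1}{3(1+\theta)})$ imposed in \eqref{rk-2d-1stEulerstep} becomes essential, and it is the part most sensitive to how tightly the flux differences are estimated.
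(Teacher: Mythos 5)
Your proposal is correct and follows essentially the same route as the paper: split the forward Euler step into the $x$- and $y$-half-updates $V$ and $W$ with the doubled ratios $\bar\lambda_x,\bar\lambda_y$, write the directional update in incremental (Harten-type) form with coefficients bounded in $[0,\tfrac13]$ via the CFL condition and the $(1+\theta)$ bound from Remark \ref{remark:2}, and control the residual flux-heterogeneity term (different interface positions and convolution values) by a further $\tfrac13\rho^{k,n}_{ij}$ using \hyperlink{H0}{\textbf{(H0)}}(2), \hyperlink{H1}{\textbf{(H1)}} and $\Delta x\le\tfrac1{3M}$. Your emphasis that the single-stage lemma must hold for arbitrary non-negative data so it can be reused at the second stage, and that the three thresholds in \eqref{cfl-2D} come from making the three coefficients simultaneously non-negative, matches the paper's bookkeeping exactly.
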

\begin{proof}
To prove the positivity of the second-order scheme, we employ induction on the time index $n$. The base case for $n=0$ holds trivially as initial data is non-negative, i.e., $\rho_{ij}^{k,0} \geq 0$ for all $i, j \in \mathbb{Z}$ and for all $k \in \{1,2, \dots, N\}.$
For $n\ge 0,$ it is required to show that
$\rho_{ij}^{k,n+1} \ge 0$ whenever $\rho_{ij}^{k,n} \ge 0.$ To do this, it suffices to prove that the forward Euler step \eqref{rk-2d-1stEulerstep} satisfies $\rho_{ij}^{k,(1)}\ge 0$   whenever $\rho_{ij}^{k,n} \ge 0$. This reduces to verifying that  $V_{ij}^{k,(1)} \ge 0,$ as  the  same argument applies to  $W_{ij}^{k,(1)}.$
\par
By adding and subtracting the term $\bar{\lambda}_{x}\left(F^{k,n}_{i+\frac{1}{2}, j}(\rho_{i+\frac{1}{2},j}^{k,n,-}, \rho_{i-\frac{1}{2},j}^{k,n,+})- \right. \\\left.F^{k,n}_{i-\frac{1}{2}, j}(\rho_{i+\frac{1}{2},j}^{k,n,-}, \rho_{i-\frac{1}{2},j}^{k,n, +})\right)$ in \eqref{dimsplitx-1},  $V_{ij}^{k,(1)}$ reads as 
\begin{nalign}\label{eq:RKxsweep}
        V_{ij}^{k,(1)} 
   &   =    \rho_{i, j}^{k,n} - a^{k,n}_{i-\frac{1}{2},j} (\rho_{i,j}^{k,n}-\rho_{i-1,j}^{k,n}) + b^{k,n}_{i+\frac{1}{2}, j}(\rho^{k,n}_{i+1,j} - \rho^{k,n}_{i,j}) \\
   & \hspace{0.5cm}- \bar{\lambda}_{x}\Big(F^{k,n}_{i+\frac{1}{2}, j}(\rho_{i+\frac{1}{2},j}^{k,n,-}, \rho_{i-\frac{1}{2},j}^{k,n,+}) - F^{k,n}_{i-\frac{1}{2}, j}(\rho_{i+\frac{1}{2},j}^{k,n,-}, \rho_{i-\frac{1}{2},j}^{k,n,+})\Big)\\
   & = \Bigl(1-a^{k,n}_{i-\frac{1}{2},j}- b^{k,n}_{i+\frac{1}{2},j} \Big)\rho_{i, j}^{k,n}+a^{k,n}_{i-\frac{1}{2},j}\rho_{i-1, j}^{k,n} +b^{k,n}_{i+\frac{1}{2},j} \rho_{i+1, j}^{k,n} \\
    &\hspace{0.5cm}- \bar{\lambda}_{x}\Big(F^{k,n}_{i+\frac{1}{2}, j}(\rho_{i+\frac{1}{2},j}^{k,n,-}, \rho_{i-\frac{1}{2},j}^{k,n,+}) - F^{k,n}_{i-\frac{1}{2}, j}(\rho_{i+\frac{1}{2},j}^{k,n,-}, \rho_{i-\frac{1}{2},j}^{k,n,+})\Big),
\end{nalign}
where 
\begin{align*}
    a_{i-\frac{1}{2},j}^{k,n} &\coloneqq  \bar{\lambda}_{x}\tilde{a}_{i-\frac{1}{2},j}^{k,n} \left(\frac{\rho_{i+\frac{1}{2},j}^{k,n, -}-\rho_{i-\frac{1}{2},j}^{k,n,-}}{\rho_{i,j}^{k,n}-\rho_{i-1,j}^{k,n}} \right),\\
    b_{i+\frac{1}{2},j}^{k,n} &\coloneqq - \bar{\lambda}_{x}\tilde{b}_{i+\frac{1}{2},j}^{k,n} \left(\frac{\rho_{i+\frac{1}{2},j}^{k,n, +}-\rho_{i-\frac{1}{2},j}^{k,n,+}}{\rho_{i+1,j}^{k,n}-\rho_{i,j}^{k,n}} \right),
\end{align*}
with \begin{align*}
    \tilde{a}_{i-\frac{1}{2},j}^{k,n} &:= \frac{\left[F^{k,n}_{i-\frac{1}{2}, j}(\rho_{i+\frac{1}{2},j}^{k,n,-}, \rho_{i-\frac{1}{2},j}^{k,n, +}) - F^{k,n}_{i-\frac{1}{2}, j}(\rho_{i-\frac{1}{2},j}^{k,n,-}, \rho_{i-\frac{1}{2},j}^{k,n,+})\right]}{(\rho_{i+\frac{1}{2},j}^{k,n, -}-\rho_{i-\frac{1}{2},j}^{k,n,-})} \quad \mbox{and}\\
    \tilde{b}_{i+\frac{1}{2},j}^{k,n} &:= \frac{\left[F^{k,n}_{i+\frac{1}{2}, j}(\rho_{i+\frac{1}{2},j}^{k,n,-}, \rho_{i+\frac{1}{2},j}^{k,n, +}) - F^{k,n}_{i+\frac{1}{2}, j}(\rho_{i+\frac{1}{2},j}^{k,n,-}, \rho_{i-\frac{1}{2},j}^{k,n,+})\right]}{(\rho_{i+\frac{1}{2},j}^{k,n, +}-\rho_{i-\frac{1}{2},j}^{k,n,+})}.
\end{align*}
\par
We will now show that $\displaystyle 0 \le  a_{i-\frac{1}{2},j}^{k,n}, b^{k,n}_{i+\frac{1}{2},j} \leq \frac{1}{3}.$ Observe that
$$0 \leq \left(\frac{\rho_{i+\frac{1}{2},j}^{k,n, -}-\rho_{i-\frac{1}{2},j}^{k,n,-}}{\rho_{i,j}^{k,n}-\rho_{i-1,j}^{k,n}  } \right), \left(\frac{\rho_{i+\frac{1}{2},j}^{k,n, +}-\rho_{i-\frac{1}{2},j}^{k,n,+}}{\rho_{i,j}^{k,n}-\rho_{i-1,j}^{k,n}  } \right) \leq (1+\theta),$$
where $\theta$ is as defined in the minmod limiter in \eqref{eq:slopes}.
From the definition of $F_{i+\frac{1}{2}, j}^{k,n}$ in \eqref{LxF-2Dflux}  and applying the mean value theorem, it follows that 
\begin{nalign}\label{eq:a_upperbd}
     a_{i-\frac{1}{2},j}^{k,n}&= \frac{\bar{\lambda}_{x}\hat{a}_{i-\frac{1}{2},j}^{k,n}}{2(\rho^{k,n,-}_{i +\frac{1}{2},j}- \rho^{k,n,-}_{i -\frac{1}{2}, j})} \left(\frac{\rho_{i+\frac{1}{2},j}^{k,n, -}-\rho_{i-\frac{1}{2},j}^{k,n,-}}{\rho_{i,j}^{k,n}-\rho_{i-1,j}^{k,n}  } \right)\\
 &= \left(\frac{\bar{ \lambda}_{x}\partial_{\rho}f_{i-\frac{1}{2}, j}^{k,n}(\zeta_{i-\frac{1}{2},j}^{k,n})+ \bar{\alpha}}{2}\right)\left(\frac{\rho_{i+\frac{1}{2},j}^{k,n, -}-\rho_{i-\frac{1}{2},j}^{k,n,-}}{\rho_{i,j}^{k,n}-\rho_{i-1,j}^{k,n}  } \right)\\
&\le  \frac{ \bar{\lambda}_{x}\lVert \partial_{\rho}f^k\rVert+ \bar{\alpha}}{2} (1+\theta)
\le \frac{1}{3},
\end{nalign}
where $\displaystyle \hat{a}_{i-\frac{1}{2},j}^{k,n} := \left( f^{k,n}_{i-\frac{1}{2}, j}(\rho_{i+\frac{1}{2},j}^{k,n,-})- f^{k,n}_{i-\frac{1}{2}, j}(\rho_{i-\frac{1}{2},j}^{k,n,-}) +\frac{\bar{\alpha}}{ \bar{\lambda}_x}( \rho_{i+\frac{1}{2},j}^{k,n, -}-\rho_{i-\frac{1}{2},j}^{k,n,-})\right)$ and for some  $\zeta^{k,n}_{i-\frac{1}{2},j} \in \mathcal{I}(\rho_{i+\frac{1}{2},j}^{k,n,-},\rho_{i-\frac{1}{2},j}^{k,n,-}).$  

Here, the last inequality follows from the fact that $  \bar{\lambda}_x {\bigl(6(1+\theta)\lVert\partial_{\rho}f^{k} \rVert  +1\bigr)} \leq 4-6\bar{ \alpha}(1+\theta),$ {\cblue {noted from the CFL  condition \eqref{cfl-2D} }}.
Further, hypothesis \hyperlink{H0}{\textbf{(H0)}} together with  the inequality $\bar{\lambda}_x {\bigl(6(1+\theta)\lVert\partial_{\rho}f^k \rVert  +1\bigr)} \leq 6 \bar{\alpha}$ obtained from the CFL condition (\ref{cfl-2D}), yield
\begin{align*}
    a_{i-\frac{1}{2},j}^{k,n} \geq  \frac{-\bar{\lambda}_{x}\lVert \partial_{\rho}f_{i-\frac{1}{2}, j}^{k}\rVert+ \bar{\alpha}}{2}  \left(\frac{\rho_{i+\frac{1}{2},j}^{k,n, -}-\rho_{i-\frac{1}{2},j}^{k,n,-}}{\rho_{i,j}^{k,n}-\rho_{i-1,j}^{k,n}  } \right)\geq 0.
\end{align*}
\par
In a similar way, we obtain the  bound 
\begin{nalign}\label{eq:b_bounds}
    0 \leq b^{k,n}_{i+\frac{1}{2},j} \leq \frac{1}{3}.
\end{nalign}
\par
To estimate the last term of \eqref{eq:RKxsweep}, we use the definition \eqref{LxF-2Dflux} and apply the triangle inequality, leading to
\begin{align*}
  &  \abs[\Big]{F^{k,n}_{i+\frac{1}{2}, j}(\rho_{i+\frac{1}{2},j}^{k,n,-}, \rho_{i-\frac{1}{2},j}^{k,n,+}) - F^{k,n}_{i-\frac{1}{2}, j}(\rho_{i+\frac{1}{2},j}^{k,n,-}, \rho_{i-\frac{1}{2},j}^{k,n,+})} \le J_1+J_2,\\
\end{align*}
where 
\begin{nalign}\label{eq:J1J2}
    J_1&:=\frac{1}{2}\abs[Big]{f^{k}(t^{n}, x_{i+\frac{1}{2}},y_{j},\rho_{i+\frac{1}{2},j}^{k,n,-}, \boldsymbol{A}^{n}_{i+\frac{1}{2}, j})-f^{k}(t^{n}, x_{i-\frac{1}{2}},y_{j},\rho_{i+\frac{1}{2},j}^{k,n,-}, \boldsymbol{A}^{n}_{i-\frac{1}{2}, j})} \,\, \mbox{and}\\
    J_2&:=\frac{1}{2}\abs[big]{f^{k}(t^{n}, x_{i+\frac{1}{2}},y_{j},\rho_{i-\frac{1}{2},j}^{k,n,+}, \boldsymbol{A}^{n}_{i+\frac{1}{2}, j})-f^{k}(t^{n}, x_{i-\frac{1}{2}},y_{j},\rho_{i-\frac{1}{2},j}^{k,n,+}, \boldsymbol{A}^{n}_{i-\frac{1}{2}, j})} .
\end{nalign}
\par
Note that, by the choice of the slope limiter \eqref{eq:slopes}, the face values $\rho_{i-\frac{1}{2},j}^{k,n,+},\\ \;\rho_{i+\frac{1}{2},j}^{k,n,-} \ge 0,\;\, \forall i,j \in \mathbb{Z}.$ Further, as a consequence of Remark \ref{remark:2}, we also have
\begin{nalign}\label{eq:upbd_recon}
    \rho_{i+\frac{1}{2},j}^{k,n,-} &=\rho_{i,j}^{k,n} +\frac{1}{2}(\rho_{i+\frac{1}{2},j}^{k,n,-}-\rho_{i-\frac{1}{2},j}^{k,n,+})\le \rho_{i,j}^{k,n} +\frac{1}{2}|\rho_{i+\frac{1}{2},j}^{k,n,-}-\rho_{i-\frac{1}{2},j}^{k,n,+}| \\ &\le (1+\theta) \rho_{i,j}^{k,n},\\
    \rho_{i-\frac{1}{2},j}^{n,+} &=\rho_{i,j}^{k,n} -\frac{1}{2}(\rho_{i+\frac{1}{2},j}^{k,n,-}-\rho_{i-\frac{1}{2},j}^{k,n,+})\le \rho_{i,j}^{k,n} +\frac{1}{2}|\rho_{i+\frac{1}{2},j}^{k,n,-}-\rho_{i-\frac{1}{2},j}^{k,n,+}| \\ &\le (1+\theta) \rho_{i,j}^{k,n}.
\end{nalign}
\par
Furthermore, by adding and subtracting $f^{k}(t^{n}, x_{i-\frac{1}{2}},y_{j},\rho_{i+\frac{1}{2},j}^{k,n,-}, \boldsymbol{A}^{n}_{i+\frac{1}{2}, j})$ to the term $J_1$ of \eqref{eq:J1J2} and using the hypotheses \hyperlink{H0}{\textbf{(H0)}} and  \hyperlink{H1}{\textbf{(H1)}} together with the expression \eqref{eq:upbd_recon}, we obtain the following estimate 
\begin{nalign}\label{eq:J1bd}
J_1
&\le \frac{1}{2}\left(\abs[big]{f^{k}(t^{n}, x_{i+\frac{1}{2}},y_{j},\rho_{i+\frac{1}{2},j}^{k,n,-}, \boldsymbol{A}^{n}_{i+\frac{1}{2}, j})-f^{k}(t^{n}, x_{i-\frac{1}{2}},y_{j},\rho_{i+\frac{1}{2},j}^{k,n,-}, \boldsymbol{A}^{n}_{i+\frac{1}{2}, j})} \right)\\
& \spc +\frac{1}{2} \left(\abs[big]{f^{k}(t^{n}, x_{i-\frac{1}{2}},y_{j},\rho_{i+\frac{1}{2},j}^{k,n,-}, \boldsymbol{A}^{n}_{i+\frac{1}{2}, j})}\right) \\ & \spc +\frac{1}{2}\left(\abs[big]{f^{k}(t^{n}, x_{i-\frac{1}{2}},y_{j},\rho_{i+\frac{1}{2},j}^{k,n,-}, \boldsymbol{A}^{n}_{i-\frac{1}{2}, j})}\right) \\
&=\frac{1}{2} \left(|\partial_x f^{k}(t^{n}, \bar{x_{i}},y_{j},\rho_{i+\frac{1}{2},j}^{k,n,-}, \boldsymbol{A}^{n}_{i+\frac{1}{2}, j})| \Delta x \right)
\\& \spc +  \frac{1}{2}\left(|\partial_\rho f^{k}(t^{n}, x_{i-\frac{1}{2}},y_{j},\bar{\rho_i}, \boldsymbol{A}^{n}_{i+\frac{1}{2}, j})|\rho_{i+\frac{1}{2},j}^{k,n,-} \right) \\
&\spc + \frac{1}{2}\left( |\partial_\rho f^{k}(t^{n}, x_{i-\frac{1}{2}},y_{j},\hat{\rho_i}, \boldsymbol{A}^{n}_{i-\frac{1}{2}, j})|\rho_{i+\frac{1}{2},j}^{k,n,-}\right)
\\
&\le \left(\norm{\partial_\rho f^k} +\frac{1}{2} M \Delta x\right)\rho_{i+\frac{1}{2},j}^{k,n,-} \le \left(\norm{\partial_\rho f^k} +\frac{1}{2} M \Delta x\right)(1+\theta)\rho_{i,j}^{k,n}.
\end{nalign}
where $\bar{x}_{i} \in (x_{\imh},x_{\iph})$ and \cblue{$\bar{\rho}_{i}, \hat{\rho}_{i} \in \mathcal{I}(0,\rho_{i+\frac{1}{2},j}^{k,n,-}).$} The term $J_2$ is treated similarly, to obtain
\begin{nalign}\label{eq:J2bd}
    J_2 \le (\norm{\partial_\rho f^k} +\frac{1}{2} M \Delta x)\rho_{i-\frac{1}{2},j}^{n,+} \le (\norm{\partial_\rho f^k} +\frac{1}{2} M \Delta x)(1+\theta)\rho_{i,j}^{k,n}.
\end{nalign}
\par
Combining the estimates \eqref{eq:J1bd} and \eqref{eq:J2bd}, we get
\begin{nalign}\label{eq:J1+J2}
    J_1+J_2  \le \left(2(1+\theta)\norm{\partial_\rho f^k} + M \Delta x\right)\rho_{i,j}^{k,n}.
\end{nalign}
\par
Next, in view of \eqref{eq:J1+J2}, we arrive at the estimate
\begin{nalign}\label{eq:Fdif_2ndbd}
    &\bar{ \lambda_x}\abs[\Big]{F^{k,n}_{i+\frac{1}{2}, j}(\rho_{i+\frac{1}{2},j}^{k,n,-}, \rho_{i-\frac{1}{2},j}^{k,n,+}) - F^{k,n}_{i-\frac{1}{2}, j}(\rho_{i+\frac{1}{2},j}^{k,n,-}, \rho_{i-\frac{1}{2},j}^{k,n,+})} \\& \le \bar{\lambda}_x\left(2(1+\theta)\norm{\partial_\rho f^k} + M \Delta x\right)\rho_{i,j}^{k,n}\le \frac{1}{3}\rho_{i,j}^{k,n},
\end{nalign}
where we use the conditions $\bar{\lambda}_x(6(1+\theta)\norm{\partial_\rho f^k} + 1) \le 1$ (derived from \eqref{cfl-2D}) and $\displaystyle \Delta x\leq \frac{1}{3M}.$

Thus, we derive the following estimate using the expressions \eqref{eq:a_upperbd}, \eqref{eq:b_bounds} and  \eqref{eq:Fdif_2ndbd} in (\ref{eq:RKxsweep})
\begin{align}
V_{i,j}^{k,(1)}& \geq \Bigl(1-a^{k,n}_{i-\frac{1}{2},j}- b^{k,n}_{i+\frac{1}{2},j} \Big)\rho_{i, j}^{k,n}+a^{k,n}_{i-\frac{1}{2},j}\rho_{i-1, j}^{k,n} +b^{k,n}_{i+\frac{1}{2},j} \rho_{i+1, j}^{k,n} \no\\
    &\spc- \bar{\lambda}_{x}\abs[\Big]{F^{k,n}_{i+\frac{1}{2}, j}(\rho_{i+\frac{1}{2},j}^{k,n,-}, \rho_{i-\frac{1}{2},j}^{k,n,+}) - F^{k,n}_{i-\frac{1}{2}, j}(\rho_{i+\frac{1}{2},j}^{k, n,-}, \rho_{i-\frac{1}{2},j}^{k,n,+})} \no \\
    &\ge  \Bigl(1-a^{k,n}_{i-\frac{1}{2},j}- b^{k,n}_{i+\frac{1}{2},j} -\frac{1}{3}\Big)\rho_{i, j}^{k,n}+a^{k,n}_{i-\frac{1}{2},j}\rho_{i-1, j}^{k,n} +b^{k,n}_{i+\frac{1}{2},j} \rho_{i+1, j}^{k,n} \ge 0 \no.
\end{align}
\par
    In a similar way, one can show that $W_{ij}^{k,(1) } \ge 0$ and finally we deduce that $\displaystyle\rho^{k,(1)}_{ij} \ge 0,$ for all $i,j \in \mathbb{Z}.$ {\cblue {Eventually, we obtain $\displaystyle\rho^{k,(2)}_{ij} \ge 0,$ for all $i,j \in \mathbb{Z},$ analogously. }}
Thus, \cblue{ by considering \eqref{RK-2d} }, we conclude that the final numerical solutions  satisfy $\displaystyle \rho^{k,n+1}_{ij} \ge 0,$ for $i,j \in \mathbb{Z},$ thereby completing  the proof.
\end{proof}
\cblue{
\begin{remark}
    When $\theta=0$, the second-order scheme \eqref{RK-2d} reduces to a first-order in space and second-order in time scheme (see equation (5.2) in \cite{gowda2023}), and the CFL conditions (\ref{cfl-2D}) reduce to
\begin{align}\label{eq:CFLfo}
\bar{\lambda}_x \leq \frac{\min\{1, 4- 6\bar{\alpha} , 6\bar{\alpha}\}}{(6\lVert\partial_{\rho}f^k \rVert  +1)} , \; \bar{\lambda}_y\le \frac{\min\{1, 4- 6\bar{\beta} , 6\bar{\beta}\}}{(6\lVert\partial_{\rho}g^k \rVert  +1)}. 
 \end{align}
 \textcolor{black}{Moreover, we observe that the Euler forward step \eqref{rk-2d-1stEulerstep} in the second-order scheme reduces to the first-order scheme \eqref{eq:foscheme} when $\theta = 0$. Consequently, by setting $\theta = 0$ in  the proof of Theorem \ref{thm:positivity}, we obtain that the FO scheme is positivity preserving under the CFL condition \eqref{eq:CFLfo}.}
\end{remark}}
\cblue{
\begin{remark}\label{rem:alphabeta}
    For the first-order scheme \eqref{eq:foscheme}, the CFL condition \eqref{eq:CFLfo} implies that the coefficients $\alpha, \beta$ in the numerical flux \eqref{LxF-2Dflux} should satisfy $\alpha, \beta \in \left(0, \frac{1}{3}\right)$. Similarly, for the second-order scheme \eqref{RK-2d}, the CFL condition \eqref{cfl-2D} requires that $\alpha, \beta \in \left(0, \frac{2}{9}\right)$.
\end{remark}}

We now present a corollary to Theorem \ref{thm:positivity}, which will aid in proving the $\mathrm{L}^\infty$-stability.
\begin{corollary}($\mathrm{L}^{1}$-  stability)\label{corollary:L1stable} Under the assumptions of Theorem \ref{thm:positivity}, for a non-negative initial data  $\boldsymbol{\rho}_{0} \in \mathrm{L}^{1} \cap \mathrm{L}^{\infty}(\mathbb{R}^{2}; \mathbb{R}_{+}^{N}),$ the approximate solution $\boldsymbol{\rho}_{\Delta}$ obtained from the scheme \eqref{RK-2d} satisfies \begin{align}\label{eq: L1stability}
 \norm{\rho^{k}_{\Delta}(t)}_{\mathrm{L}^1} =   \norm{\rho^{k}_\Delta(0)}_{\mathrm{L}^1}, 
\end{align}
for all $k \in \{1,2, \dots, N\}$
and $t \in \mathbb{R}_+.$
\begin{proof}
By Theorem \eqref{thm:positivity}, the assumptions  $\rho^{k,0}_{i,j} \ge 0,$ imply that $\rho^{k,n}_{i,j} \ge 0$ for all $i,j \in \mathbb{Z}$ and $n \in \mathbb{N}.$ Additionally, each stage in the Runge-Kutta time stepping satisfies  $\rho^{k,(1)}_{i,j},\rho^{k,(2)}_{i,j} \ge 0$ for all $i,j \in \mathbb{Z}.$  
Therefore, we have the following
\begin{align*}
    \norm{\rho^{k,(1)}}_{\mathrm{L}^1} &= \Delta x  \Delta y  \sum_{i,j \in \mathbb{Z}}\rho^{k,(1)}_{i,j} = \Delta x  \Delta y  \sum_{i,j \in \mathbb{Z}}\rho^{k,n}_{i,j}
    \end{align*} and 
    \begin{align*}
    \norm{\rho^{k,(2)}}_{\mathrm{L}^1} &= \Delta x  \Delta y  \sum_{i,j \in \mathbb{Z}}\rho^{k,(2)}_{i,j} = \Delta x  \Delta y  \sum_{i,j \in \mathbb{Z}}\rho^{k,(1)}_{i,j}. 
\end{align*}
Consequently, we obtain the result
\begin{align*}
    \norm{\rho^{k,n+1}}_{\mathrm{L}^1}= \Delta x \Delta y  \sum_{i,j} \rho_{i,j}^{k,n+1} &= \Delta x  \Delta y  \sum_{i,j \in \mathbb{Z}}  \frac{\rho_{i,j}^{k,(2)}+\rho_{i,j}^{k,n}}{2}\\&= \Delta x  \Delta y  \sum_{i,j \in \mathbb{Z}} \rho^{k,n}_{i,j}  = \norm{\rho^{k,n}}_{\mathrm{L}^1}.
\end{align*}
The equality \eqref{eq: L1stability} now follows immediately.
\end{proof}
\end{corollary}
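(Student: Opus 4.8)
The plan is to reduce the $\mathrm{L}^1$ identity \eqref{eq: L1stability} to a statement about conservation of the discrete mass $\sum_{i,j\in\mathbb{Z}}\rho^{k,n}_{ij}$, using positivity to remove the absolute value from the $\mathrm{L}^1$ norm. First I would invoke Theorem \ref{thm:positivity}: since the initial data are non-negative, every iterate $\rho^{k,n}_{ij}$ and both intermediate Runge--Kutta stages $\rho^{k,(1)}_{ij},\rho^{k,(2)}_{ij}$ are non-negative for all $i,j\in\mathbb{Z}$. Hence, for the piecewise constant approximation one has $\norm{\rho^k_\Delta(t^n)}_{\mathrm{L}^1}=\Delta x\,\Delta y\sum_{i,j\in\mathbb{Z}}\rho^{k,n}_{ij}$, so it suffices to show this sum is independent of $n$.

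Next I would verify that each forward Euler sweep conserves the discrete mass. Summing the update \eqref{rk-2d-1stEulerstep} over all $i,j\in\mathbb{Z}$, the spatial flux differences appear in telescoping form: $\sum_i\bigl(F^{k,n}_{i+\frac12,j}-F^{k,n}_{i-\frac12,j}\bigr)$ collapses, and likewise for the $G$-fluxes summed in $j$, so that $\sum_{i,j}\rho^{k,(1)}_{ij}=\sum_{i,j}\rho^{k,n}_{ij}$. The identical telescoping argument applied to the second stage \eqref{rk-2d-2stEulerstep} gives $\sum_{i,j}\rho^{k,(2)}_{ij}=\sum_{i,j}\rho^{k,(1)}_{ij}$, and the convex-combination step \eqref{RK-2d} then yields $\sum_{i,j}\rho^{k,n+1}_{ij}=\tfrac12\bigl(\sum_{i,j}\rho^{k,n}_{ij}+\sum_{i,j}\rho^{k,(2)}_{ij}\bigr)=\sum_{i,j}\rho^{k,n}_{ij}$. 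An induction on $n$ propagates the identity from the initial datum, giving exactly \eqref{eq: L1stability}.

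The step requiring genuine care is justifying the telescoping: the doubly-infinite sums must converge absolutely so that the cancellation is legitimate and no mass escapes to infinity. I would secure this from the hypotheses already in force. The datum $\boldsymbol{\rho}_0\in\mathrm{L}^1$ makes the initial mass $\sum_{i,j}\rho^{k,0}_{ij}$ finite; positivity then gives $\rho^{k,n}_{ij}\to0$ as $\abs{i}+\abs{j}\to\infty$ once finiteness of the running sum is maintained; and hypothesis \hyperlink{H0}{\textbf{(H0)}}, which forces $f^k$ and $g^k$ (and hence, together with the viscous increments, the numerical fluxes $F^{k,n}$, $G^{k,n}$) to vanish where the density vanishes, ensures the fluxes decay at infinity. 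With summability of the fluxes in each coordinate direction, the telescoping series converge to zero at the tails, so the rearrangement and cancellation are valid and the mass-conservation chain closes. I expect this decay-at-infinity bookkeeping to be the only non-routine ingredient; the remainder is simply the conservative structure of the finite-volume update combined with the positivity already established in Theorem \ref{thm:positivity}.
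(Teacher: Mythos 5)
Your proposal is correct and follows essentially the same route as the paper: positivity from Theorem \ref{thm:positivity} converts the $\mathrm{L}^1$ norm into the discrete mass $\Delta x\,\Delta y\sum_{i,j}\rho^{k,n}_{ij}$, the telescoping of the conservative flux differences shows each Runge--Kutta stage preserves that mass, and the convex combination \eqref{RK-2d} closes the induction. The only difference is that you make explicit the tail/summability justification for the telescoping over $\mathbb{Z}^2$ (via $\boldsymbol{\rho}_0\in\mathrm{L}^1$ and \hyperlink{H0}{\textbf{(H0)}}), which the paper's proof leaves implicit --- a reasonable addition rather than a divergence in method.
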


\section{\texorpdfstring{$\mathrm{L}^\infty$ stability}{L-infinity stability}}\label{section:Linfinty}
In this section, we establish that the second-order scheme given by  \eqref{RK-2d} exhibits $\mathrm{L}^\infty$- stability. 

\begin{theorem}($\mathrm{L}^\infty$-stability) Let $\boldsymbol{\rho}_{0} \in \mathrm{L}^{1} \cap \mathrm{L}^{\infty}(\mathbb{R}^{2}; \mathbb{R}_{+}^{N})$. If the hypotheses \hyperlink{H0}{\textbf{(H0)}}, \hyperlink{H1}{\textbf{(H1)}} and \hyperlink{H2}{\textbf{(H2)}} and the CFL condition \eqref{cfl-2D} hold along with the mesh-size restriction  $\displaystyle \Delta x, \Delta y \leq \frac{1}{3M}$, then there exists a constant $C \geq 0$ depending only on $\boldsymbol{\rho}_{0}, \boldsymbol{\eta}, \boldsymbol{\nu}, \{f^k\}_{k=1}^{N} $ and $ \{g^k\}_{k=1}^{N}$ such that  the approximate solution $\boldsymbol{\rho}_{\Delta}$ obtained from  the second-order scheme (\ref{RK-2d}) satisfies 
\begin{align*}
\norm{\boldsymbol{\rho}_\Delta(t)} \le \norm{\boldsymbol{\rho}_\Delta(0)} e^{Ct},
\end{align*} for any $t \in \mathbb{R}_+.$
\begin{proof}
By Corollary \ref{corollary:L1stable} and applying the mean value theorem, we observe that the discrete convolutions \eqref{eq:A} satisfy the following estimate
\begin{nalign}\label{eq:Adifbd}
&\norm{\boldsymbol{A}^{n}_{i+\frac{1}{2}, j}-\boldsymbol{A}^{n}_{i-\frac{1}{2}, j}} \\ & \spc = \norm[\Bigg]{\left(\Delta x \Delta y \sum_{k=1}^{N}\sum_{p,l \in \mathbb{Z}}\left(\eta_{i+\frac{1}{2}-l, j-p}^{q,k}-\eta_{i-\frac{1}{2}-l, j-p}^{q,k}\right)        \ \rho^{k,n}_{l, p}\right)_{q=1}^{m}} \\& \spc\leq    \Delta x\left(\norm{\partial_x \boldsymbol{\eta}} \norm{\boldsymbol{\rho}_\Delta(t^n)}_{\mathrm{L}^1}\right)\\
& \spc \leq \Delta x\left(\norm{\partial_x \boldsymbol{\eta}} \norm{\boldsymbol{\rho}_\Delta(0)}_{\mathrm{L}^1}\right).\end{nalign}
  Further, invoking  the estimate \eqref{eq:Adifbd} and using the hypotheses \hyperlink{H0}{\textbf{(H0)}} and \hyperlink{H1}{\textbf{(H1)}},  we arrive at the following estimate  
\begin{nalign}
    &\abs[\Big]{F^{k,n}_{i+\frac{1}{2}, j}(\rho_{i+\frac{1}{2},j}^{k,n,-}, \rho_{i-\frac{1}{2}, j}^{k,n,+}) - F^{k,n}_{i-\frac{1}{2}, j}(\rho_{i+\frac{1}{2},j}^{k,n,-}, \rho_{i-\frac{1}{2},j}^{k,n,+})} \\
   &\leq \frac{1}{2} \abs[\Big]{ \left(f^{k}(t^{n}, x_{i+\frac{1}{2}},y_{j},\rho_{i+\frac{1}{2},j}^{k,n,-}, \boldsymbol{A}^{n}_{i+\frac{1}{2}, j})- f^{k}(t^{n}, x_{i-\frac{1}{2}},y_{j},\rho_{i+\frac{1}{2},j}^{k,n,-}, \boldsymbol{A}^{n}_{i-\frac{1}{2}, j}) \right)}\\
   & \spc+  \frac{1}{2}  \abs[\Big]{\left(f^{k}(t^{n}, x_{i+\frac{1}{2}},y_{j},\rho_{i-\frac{1}{2},j}^{k,n,+}, \boldsymbol{A}^{n}_{i+\frac{1}{2}, j})- f^{k}(t^{n}, x_{i-\frac{1}{2}},y_{j},\rho_{i-\frac{1}{2},j}^{k,n,+}, \boldsymbol{A}^{n}_{i-\frac{1}{2}, j}) \right)}  \\
&\leq \frac{1}{2}|\partial_{x}f^{k}(t^n,\bar{x_i},y_j,\rho_{i+\frac{1}{2},j}^{k,n,-},\bar{\boldsymbol{A}}^{n}_{i, j})\Delta x| \\ & \spc+\frac{1}{2} \left(\norm{\nabla_A f^{k}(t^n,\bar{x_i},y_j,\rho_{i+\frac{1}{2},j}^{k,n,-},\bar{\boldsymbol{A}}_{i, j}^{n})}\norm{\boldsymbol{A}^{n}_{i+\frac{1}{2}, j}-\boldsymbol{A}^{n}_{i-\frac{1}{2}, j}}  \right) \\
&\spc + \frac{1}{2} |\partial _x f^{k}(t^n, \tilde{x}_{i},y_j,\rho_{i-\frac{1}{2},j}^{k,n,+},\tilde{\boldsymbol{A}}^{n}_{i, j})\Delta x|\\ &\spc + \frac{1}{2}\left(\norm{\nabla_A f^k(t^n,\tilde{x_i},y_j,\rho_{i-\frac{1}{2},j}^{k,n,+},\tilde{\boldsymbol{A}}^{n}_{i, j})} \norm{\boldsymbol{A}^{n}_{i+\frac{1}{2}, j}-\boldsymbol{A}^{n}_{i-\frac{1}{2}, j}} \right) \\
&\leq \frac{1}{2} \left(M \rho_{i+\frac{1}{2},j}^{k,n,-} \Delta x\Bigl(\norm{\partial_x \boldsymbol{\eta}} \norm{\boldsymbol{\rho}_\Delta(0)}_{\mathrm{L}^1}+1\Bigr) \right)\\ & \spc +\frac{1}{2} \left(M \rho_{i-\frac{1}{2},j}^{k,n,+} \Delta x\Bigl(\norm{\partial_x \boldsymbol{\eta}} \norm{\boldsymbol{\rho}_\Delta(0)}_{\mathrm{L}^1}+1\Bigr) \right) \\
&= M \rho_{i,j}^{k,n} \Delta x\left(\norm{\partial_x \boldsymbol{\eta}}\norm{\boldsymbol{\rho}_\Delta(0)}_{\mathrm{L}^1}+1\right),\label{bound1}
\end{nalign}
\cblue{where $\bar{x}_{i}, \tilde{x}_i \in (x_{\imh}, x_{\iph})$ and $\bar{\boldsymbol{A}}^{n}_{i, j}, \tilde{\boldsymbol{A}}^{n}_{i, j} \in \mathcal{I}(\boldsymbol{A}^{n}_{i+\frac{1}{2}, j}, \boldsymbol{A}^{n}_{i-\frac{1}{2}, j}).$ }
Now, in view of the estimates \eqref{eq:a_upperbd}, \eqref{eq:b_bounds} and \eqref{bound1}, the terms $V_{ij}^{k,(1)}$ in  \eqref{eq:RKxsweep} can be bounded as
\begin{nalign}\label{eq:V_bd_max}
| V_{ij}^{k,(1)} | & \le \Bigl(1-a^{k,n}_{i-\frac{1}{2},j}- b^{k,n}_{i+\frac{1}{2},j} \Big)\abs{\rho_{i, j}^{k,n}}+a^{k,n}_{i-\frac{1}{2},j}\abs{\rho_{i-1, j}^{k,n}} +b^{k,n}_{i+\frac{1}{2},j} \abs{\rho_{i+1, j}^{k,n}} \\
    &\hspace{0.5cm}+ \bar{\lambda}_{x}\abs[\Big]{F^{k,n}_{i+\frac{1}{2}, j}(\rho_{i+\frac{1}{2},j}^{k,n,-}, \rho_{i-\frac{1}{2},j}^{k,n,+}) - F^{k,n}_{i-\frac{1}{2}, j}(\rho_{i+\frac{1}{2},j}^{k,n,-}, \rho_{i-\frac{1}{2},j}^{k,n,+})}    \\
    &\le \norm{\rho^{k}_\Delta(t^n)}\Bigl(1+2M  \Delta t \Bigl(\norm{\partial_x \boldsymbol{\eta}} \norm{\boldsymbol{\rho}_\Delta(0)}_{\mathrm{L}^1}+1\Bigr) \Bigr). 
\end{nalign}
An analogous argument for $W_{ij}^{k,(1)}$ yields
\begin{nalign}\label{eq:w_bd_max}
    | W_{ij}^{k,(1)} |\leq \norm{\rho^{k}_\Delta(t^{n})}\Bigl(1+2M  \Delta t\Bigl(\norm{\partial_y \boldsymbol{\nu}} \norm{\boldsymbol{\rho}_\Delta(0)}_{\mathrm{L}^1}+1\Bigr) \Bigr).
\end{nalign}

Therefore, using the bounds \eqref{eq:V_bd_max} and \eqref{eq:w_bd_max} in \eqref{eq:euler_as_v+w}, it follows that
\begin{align}
| \rho^{k,(1)}_{ij} | &\le  \norm{\rho^k_\Delta(t^{n})} \Bigl(1+2M  \Delta t \Bigl(\max \{\norm{\partial_x \boldsymbol{\eta}},   \norm{\partial_y \boldsymbol{\nu}} \}  \norm{\boldsymbol{\rho}_\Delta(0)}_{\mathrm{L}^1}+1\Bigr) \Bigr).\no 
\end{align}
\par
Similar arguments for the second forward Euler step  \eqref{rk-2d-2stEulerstep} give us the estimate
\begin{nalign}\label{eq:rho2_bd}
|\rho^{k,(2)}_{ij} | &\le  \norm{\rho^{k,(1)}_\Delta} \Bigl(1+2M  \Delta t\Bigl(\max \{\norm{\partial_x \boldsymbol{\eta}},   \norm{\partial_y \boldsymbol{\nu}} \}  \norm{\boldsymbol{\rho}_\Delta(0)}_{\mathrm{L}^1}+1\Bigr) \Bigr)  \\
&\le \norm{\rho^k_\Delta(t^n)} \Bigl(1+2M  \Delta t\Bigl(\max \{\norm{\partial_x \boldsymbol{\eta}},   \norm{\partial_y\boldsymbol{\nu}} \} \norm{\boldsymbol{\rho}_\Delta(0)}_{\mathrm{L}^1}+1\Bigr) \Bigr)^2. 
\end{nalign}
\par
Finally, in light of the estimate \eqref{eq:rho2_bd}, we deduce that 
\begin{nalign}\label{eq:L_infbd}
\abs{\rho_{ij}^{k,n+1}}&=\frac{1}{2}(\abs{\rho_{ij}^{k,n}}+\abs{\rho_{ij}^{k,(2)}})\\&\le \norm{\rho^k_\Delta(t^n)} \Bigl(1+2M  \Delta t\Bigl(\max \{\norm{\partial_x \boldsymbol{\eta}},   \norm{\partial_y \boldsymbol{\nu}}\} \norm{\boldsymbol{\rho}_\Delta(0)}_{\mathrm{L}^1}+1\Bigr) \Bigr)^2 \\
 &\le  \norm{\rho^k_\Delta(t^{n-1})} \Bigl(1+2M  \Delta t\Bigl(\max \{\norm{\partial_x \boldsymbol{\eta}},   \norm{\partial_y \boldsymbol{\nu}} \}  \norm{\boldsymbol{\rho}_\Delta(0)}_{\mathrm{L}^1}+1\Bigr) \Bigr)^4 \\
&\;\;\;\;\vdots \\
& \le \norm{\rho^k_\Delta(0)}\Bigl(1+2M  \Delta t\Bigl(\max \{\norm{\partial_x \boldsymbol{\eta}},   \norm{\partial_y \boldsymbol{\nu}}\}\norm{\boldsymbol{\rho}_\Delta(0)}_{\mathrm{L}^1}+1\Bigr) \Bigr)^{2(n+1)} \\
& \le \norm{\boldsymbol{\rho}_\Delta(0)} e^{Ct},
\end{nalign}
\cblue{for $t = (n+1)\Delta t,$} where
$\displaystyle C:=4M\Bigl(1+\max \{\norm{\partial_x \boldsymbol{\eta}},   \norm{\partial_y \boldsymbol{\nu}} \}\norm{\boldsymbol{\rho}_\Delta(0)}_{\mathrm{L}^1}\Bigr).$ The estimate \eqref{eq:L_infbd} completes the proof.
\end{proof}
\end{theorem}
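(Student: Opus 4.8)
The plan is to derive a one-step recursion of the form $\norm{\boldsymbol{\rho}_\Delta(t^{n+1})} \le (1+C'\Delta t)^2\,\norm{\boldsymbol{\rho}_\Delta(t^n)}$ and then close the argument by iterating in $n$ and invoking the elementary inequality $1+x \le e^x$, which converts the resulting geometric growth into the exponential factor $e^{Ct}$. To produce the one-step estimate I would reuse the dimensional splitting $\rho^{k,(1)}_{ij} = \tfrac12\bigl(V^{k,(1)}_{ij}+W^{k,(1)}_{ij}\bigr)$ from \eqref{eq:euler_as_v+w}, the representation \eqref{eq:RKxsweep} of the $x$-sweep, and the bounds $0 \le a^{k,n}_{i-\frac{1}{2},j}, b^{k,n}_{i+\frac{1}{2},j} \le \tfrac13$ already established in the proof of Theorem \ref{thm:positivity}. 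Since the coefficients $1-a-b$, $a$, $b$ are then non-negative and sum to one, and $\rho^{k,n}_{ij}\ge 0$ by the positivity theorem, taking absolute values in \eqref{eq:RKxsweep} yields $\lvert V^{k,(1)}_{ij}\rvert \le \norm{\rho^k_\Delta(t^n)} + \bar{\lambda}_x\,\bigl\lvert F^{k,n}_{i+\frac{1}{2},j}(\cdot)-F^{k,n}_{i-\frac{1}{2},j}(\cdot)\bigr\rvert$. The whole problem thus reduces to estimating the interface flux difference by a quantity proportional to $\Delta x\,\rho^{k,n}_{ij}$.

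The crux, and the step I expect to be the main obstacle, is controlling the \emph{non-local} part of this flux difference. Because the two interface fluxes employ the distinct convolution approximations $\boldsymbol{A}^n_{i+\frac{1}{2},j}$ and $\boldsymbol{A}^n_{i-\frac{1}{2},j}$, a mean value theorem in the $x$ and $\boldsymbol{A}$ slots forces the appearance of the increment $\norm{\boldsymbol{A}^n_{i+\frac{1}{2},j}-\boldsymbol{A}^n_{i-\frac{1}{2},j}}$. Writing this increment as a lattice sum of $\bigl(\eta^{q,k}_{i+\frac{1}{2}-l,j-p}-\eta^{q,k}_{i-\frac{1}{2}-l,j-p}\bigr)\rho^{k,n}_{lp}$ and applying the mean value theorem to the kernel, which is $\mathrm{W}^{1,\infty}$ by \hyperlink{H2}{\textbf{(H2)}}, bounds each term by $\Delta x\,\norm{\partial_x\boldsymbol{\eta}}$ times the local mass, so the full sum is controlled by $\Delta x\,\norm{\partial_x\boldsymbol{\eta}}\,\norm{\boldsymbol{\rho}_\Delta(t^n)}_{\mathrm{L}^1}$. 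The essential subtlety is that this bound must be \emph{uniform in $n$}: here Corollary \ref{corollary:L1stable} is indispensable, since it replaces $\norm{\boldsymbol{\rho}_\Delta(t^n)}_{\mathrm{L}^1}$ by the fixed initial mass $\norm{\boldsymbol{\rho}_\Delta(0)}_{\mathrm{L}^1}$. This convolution-increment bound is the non-local surrogate for a Lipschitz estimate, and it is precisely what keeps the infinite convolution sum from destroying the argument.

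With the convolution increment under control, I would bound the flux difference itself. Splitting the Lax--Friedrichs flux into its two arguments $\rho^{k,n,-}_{i+\frac{1}{2},j}$ and $\rho^{k,n,+}_{i-\frac{1}{2},j}$ and differentiating in $x$ and $\boldsymbol{A}$, hypothesis \hyperlink{H1}{\textbf{(H1)}} supplies $\lvert\partial_x f^k\rvert \le M\lvert\rho\rvert$ and $\norm{\nabla_A f^k}\le M\lvert\rho\rvert$, so each piece is bounded by $M\,\rho^{k,n,\pm}\,\Delta x\bigl(\norm{\partial_x\boldsymbol{\eta}}\,\norm{\boldsymbol{\rho}_\Delta(0)}_{\mathrm{L}^1}+1\bigr)$. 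Using that the reconstructed face values average to the cell mean, $\tfrac12\bigl(\rho^{k,n,-}_{i+\frac{1}{2},j}+\rho^{k,n,+}_{i-\frac{1}{2},j}\bigr)=\rho^{k,n}_{ij}$ by \eqref{eq:facevalues_x}, the flux difference is bounded by $M\,\rho^{k,n}_{ij}\,\Delta x\bigl(\norm{\partial_x\boldsymbol{\eta}}\,\norm{\boldsymbol{\rho}_\Delta(0)}_{\mathrm{L}^1}+1\bigr)$, and multiplying by $\bar{\lambda}_x = 2\Delta t/\Delta x$ turns the $\Delta x$ into $2\Delta t$. This gives $\lvert V^{k,(1)}_{ij}\rvert \le \norm{\rho^k_\Delta(t^n)}\bigl(1+2M\Delta t(\norm{\partial_x\boldsymbol{\eta}}\norm{\boldsymbol{\rho}_\Delta(0)}_{\mathrm{L}^1}+1)\bigr)$; the mesh restriction $\Delta x \le 1/(3M)$ is what keeps the per-step amplification controlled.

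Finally, I would run the identical argument on the $y$-sweep $W^{k,(1)}_{ij}$ with $\norm{\partial_x\boldsymbol{\eta}}$ replaced by $\norm{\partial_y\boldsymbol{\nu}}$, average the two to bound $\lvert\rho^{k,(1)}_{ij}\rvert$, and then repeat for the second Runge--Kutta stage \eqref{rk-2d-2stEulerstep} to pick up the square, before combining via $\rho^{k,n+1}_{ij}=\tfrac12\bigl(\rho^{k,n}_{ij}+\rho^{k,(2)}_{ij}\bigr)$ from \eqref{RK-2d}. This produces $\norm{\boldsymbol{\rho}_\Delta(t^{n+1})} \le (1+2M\Delta t\,\kappa)^2\,\norm{\boldsymbol{\rho}_\Delta(t^n)}$ with $\kappa := \max\{\norm{\partial_x\boldsymbol{\eta}},\norm{\partial_y\boldsymbol{\nu}}\}\norm{\boldsymbol{\rho}_\Delta(0)}_{\mathrm{L}^1}+1$. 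Iterating down to $n=0$ and applying $(1+x)^{2(n+1)}\le e^{2(n+1)x}$ with $t=(n+1)\Delta t$ then yields the claim with $C = 4M\bigl(1+\max\{\norm{\partial_x\boldsymbol{\eta}},\norm{\partial_y\boldsymbol{\nu}}\}\norm{\boldsymbol{\rho}_\Delta(0)}_{\mathrm{L}^1}\bigr)$, a constant depending only on the data, as required.
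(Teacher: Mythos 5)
Your proposal follows essentially the same route as the paper's proof: the same splitting into $V$ and $W$ sweeps with the convex-combination coefficients $a,b$ from the positivity argument, the same mean-value-theorem bound on the convolution increment $\norm{\boldsymbol{A}^n_{i+\frac{1}{2},j}-\boldsymbol{A}^n_{i-\frac{1}{2},j}}$ made uniform in $n$ via the $\mathrm{L}^1$-stability corollary, the same flux-difference estimate using \hyperlink{H1}{\textbf{(H1)}} and the fact that the face values average to the cell mean, and the same iteration yielding the identical constant $C$. The only slight misattribution is that the mesh restriction $\Delta x \le \frac{1}{3M}$ enters through the positivity theorem (and hence the $\mathrm{L}^1$ conservation) rather than directly controlling the per-step amplification here, but this does not affect the validity of the argument.
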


\section{Numerical experiments}\label{section:numericalexp}
This section presents a performance comparison of \cblue{the} first- and second-order schemes for two-dimensional non-local conservation laws. \cblue{We primarily consider two types of test cases: one involving a scalar equation and the other a system, both adhering to the framework of \eqref{eq:system}.} In all the numerical results, the time-step $\Delta t$ is computed using the CFL condition \eqref{cfl-2D} corresponding to the second-order scheme  and the computational domain $[x_1,x_2] \times [y_1, y_2]$ is discretized into $(n_x\times n_y)$ number of Cartesian cells, where the grid sizes $\displaystyle \Delta x = ({x_2 - x_1})/n_x$ and $\displaystyle \Delta y = ({y_2 - y_1})/n_y.$ \cblue{The coefficient in \eqref{eq:slopes} is set to be $\theta = 0.5,$ and the parameters in the numerical fluxes \eqref{LxF-2Dflux} are both chosen to be $\alpha = \beta =1/6,$ in all the examples.}   The initial and boundary conditions  are prescribed in the description of each example. Hereafter, the first-order scheme \eqref{eq:foscheme} and the second-order scheme \eqref{RK-2d} will be referred to as FO and SO, respectively.
\par
\begin{example}\label{ex-cm1} In this example, we consider the two-dimensional macroscopic crowd dynamics problem studied in \cite{aggarwal2015}, where the density $\rho$ of pedestrians  is modeled to evolve according to the scalar non-local conservation law:
\begin{nalign}\label{crowdproblem}
    \partial_{t}\rho + \nabla \cdot (\rho(1-\rho)(1-\rho*\mu) \vec{\bs v}) = 0.
\end{nalign}
Here, the convolution  is given by
\begin{align*}
\rho*\mu(t,x,y) = \int\int_{\mathbb{R}^{2}} \mu(x- x^{\prime}, y- y^{\prime}) \rho(t, x^{\prime}, y^{\prime})\dif x^{\prime} \dif y^{\prime}.
\end{align*}
\par \cblue{The smooth kernel function $\mu$ quantifies the weight assigned by pedestrians to their surrounding crowd density}, while the vector field $\vec{\boldsymbol{v}}(x,y) = (v^{1}(x, y), v^{2}(x, y))^{T}$ describes the path they follow. It is evident that \eqref{crowdproblem} aligns with  the framework of \eqref{eq:system} (see Lemma 3.1 in \cite{aggarwal2015}).
{\cblue{We examine a scenario where two groups of individuals start from two different locations within the domain $[0, 10] \times [-1, 1],$ move in the same direction  and eventually stop at the spot $\{9.5\}\times [-1,1].$ To account for this dynamics, the velocity vector field is chosen as}}
\begin{align*}
\vec{\boldsymbol{v}}(x,y) = \begin{bmatrix}
           (1- y^{2})^{3} \textrm{exp}(-1/(x-9.5)^{2}) \chi_{(-\infty, 9.5]\times [-1, 1]}(x, y) \\
           -2y \textrm{exp}(1 - 1/y^{2})\\
         \end{bmatrix},
\end{align*}
\cblue{where for $\Omega \subseteq \mathbb{R}^2,$ $\chi_{\Omega}$ denotes the indicator function of $\Omega.$} Further, the kernel function is defined to be of compact support in a disk of radius $r=0.4$ centered at the origin:  \begin{nalign}\label{eq:kern_crowd}
\mu(x,y) = \frac{1}{\int\int_{\mathbb{R}^{2}} \tilde{\mu}}\tilde{\mu}(x, y),
\end{nalign}
where
\begin{align*}
        \tilde{\mu}(x, y) = (0.16 - x^{2} - y^{2})^{3} \chi_{\{(x,y): x^{2} + y^{2} \leq 0.16\}}(x, y).
\end{align*}
\par
Note that the kernel function $\mu$ in \eqref{eq:kern_crowd} attains a global maximum at the origin $(0,0)$ and decreases radially, reflecting the idea that pedestrians prioritize nearby crowd density over distant ones.
We  solve the problem \eqref{crowdproblem} with  the initial datum:
\begin{align}\label{eq:ic}
\rho^{0}(x, y) = \chi_{ [1, 4] \times[0.1, 0.8]}(x, y) + \chi_{[2, 5] \times [-0.8, -0.1]}(x, y),
\end{align}
given in Fig.\ref{fig:ic_crowd}, {\cblue { along with \textquoteleft no flow\textquoteright \,   boundary conditions}  } on all the boundaries of the domain.
Throughout this example, {\cblue {based on the CFL condition \eqref{cfl-2D} for the SO scheme, we set a common time-step for both the FO and SO schemes, $\Delta t = 0.026 \Delta x.$}} \cblue{This time-step is computed from \eqref{cfl-2D} by using the fact that $\norm{\partial_{\rho}f},\norm{\partial_{\rho}g}\leq 2,$ in this example.} The numerical solutions are computed in the domain $[0,10]\times[-1,1].$
First, we compute the solution at time $T=4.0,$ for both the FO and SO schemes and show that the FO scheme solutions converge towards the SO scheme solutions as the mesh is refined. This is explained in Fig. \ref{fig:crowd_foso},  where Fig. \ref{fig:crowd_foso} (a), (b) and (c) display the solutions obtained from the FO scheme, while (d) corresponds to the SO scheme.  The results clearly show that the mesh size for the FO scheme must be refined at least four times to obtain a solution profile comparable to that of the SO scheme, highlighting the  importance of the SO scheme.

In Fig. \ref{fig:crowd}, we display the numerical solutions at various time levels, $T \in \{8.0, 12.0, 16.0, 20.0\},$ computed using  both the FO and SO schemes with the same initial data as in \eqref{eq:ic}. By comparing the solution profiles, we observe significant differences between the solutions obtained from the FO and SO schemes. Additionally, we note that solutions generated using the SO scheme remain positive and exhibit $\mathrm{L}^{\infty}$-stability, thus confirming the theoretical results.  
\end{example}
\begin{example}\label{ex-cm2} We compute the \cblue{experimental order of convergence} (E.O.C.) for both the FO and SO schemes using the problem \eqref{crowdproblem} and initial condition \eqref{eq:ic} presented in Example \ref{ex-cm1}, and compare their performance. For a uniform grid with $\Delta x = \Delta y$, we denote $h := \Delta x$. Since the exact solution for the problem \eqref{crowdproblem} with the initial condition \eqref{eq:ic} is unavailable, the E.O.C. is calculated based on the $\mathrm{L}^1$-error between solutions obtained for mesh sizes $h$, $h/2$, and so on. The E.O.C. is determined using the formula:
$$\gamma := \log \left ( \frac{\|{\rho}_h - {\rho}_{\frac{h}{2}}\|_{{\mathrm{L}}^1}}{\|{\rho}_{\frac{h}{2}} - {\rho}_{\frac{h}{4}}\|_{{\mathrm{L}}^1}} \right ) / \log 2.$$
\par
Here, the numerical solutions are computed up to  time $T =0.2$ 
for mesh size $h \in \{ 0.05, 0.025, 0.00125, 0.00625, 0.003125\}$ in the computational domain $[0,10]\times[-1,1].$  Both the FO and SO scheme solutions are computed with the same time step \cblue{$\Delta t = 0.026 \Delta x.$} The results summarized in  Table \ref{table:eoa} indicate that the FO scheme achieves an E.O.C. of $\gamma \approx 0.5,$ while the SO scheme reaches an E.O.C. of $\gamma\approx 0.8.$ 

\begin{figure}
     \centering
         
         \centering
         \includegraphics[scale = 0.35 ]{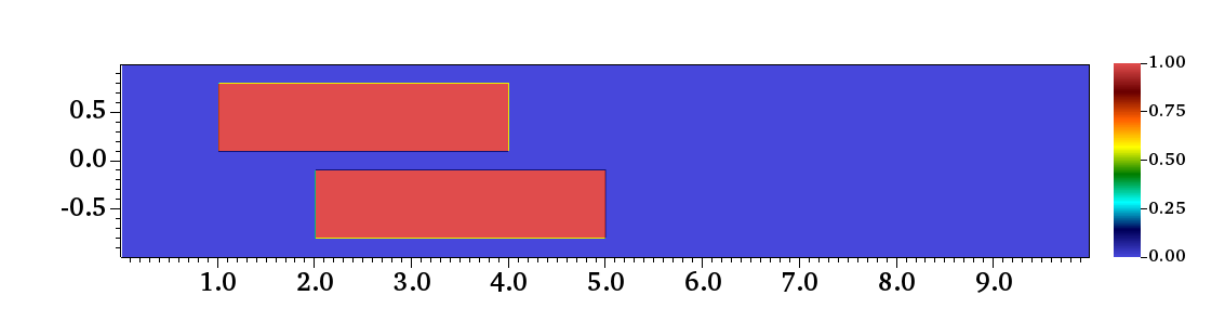}
         \caption{Example \ref{ex-cm1}: Initial condition $\rho^0$ for the problem \eqref{crowdproblem} computed on a mesh of size $\Delta x = \Delta y = 0.0125.$} 
         \label{fig:ic_crowd}
\end{figure}
\begin{figure}
     \centering
          \begin{subfigure}[b]{0.48\textwidth}
         \centering
         \includegraphics[width=\textwidth]{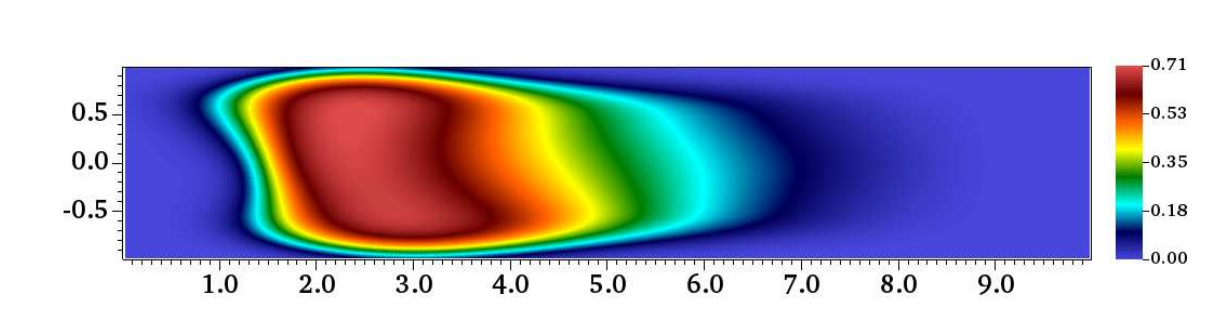}
         \caption{FO-scheme, $\Delta x = \Delta y = 0.025 (400\times \\80)$} 
     \end{subfigure}\hspace{0.1cm}
     \begin{subfigure}[b]{0.48\textwidth}
         \centering
         \includegraphics[width=\textwidth]{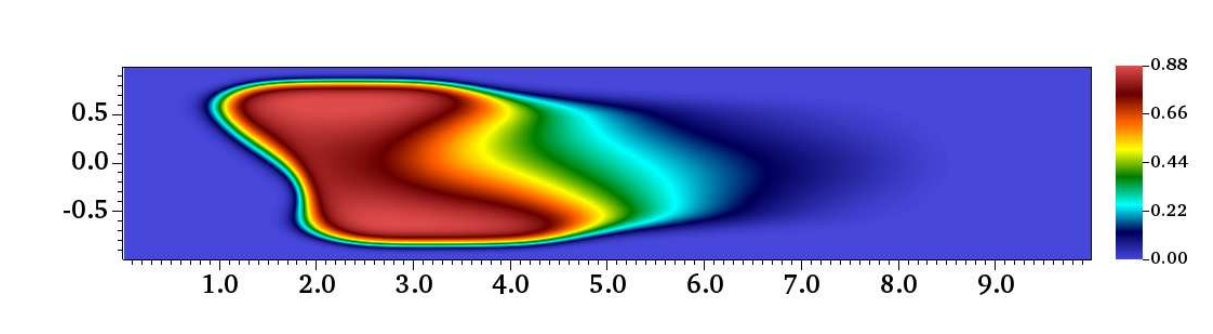}
         \caption{FO-scheme, $\Delta x = \Delta y = 0.0125\,(800 \times 160)$} 
     \end{subfigure}\\
          \begin{subfigure}[b]{0.48\textwidth}
          \centering
         \includegraphics[width=\textwidth]{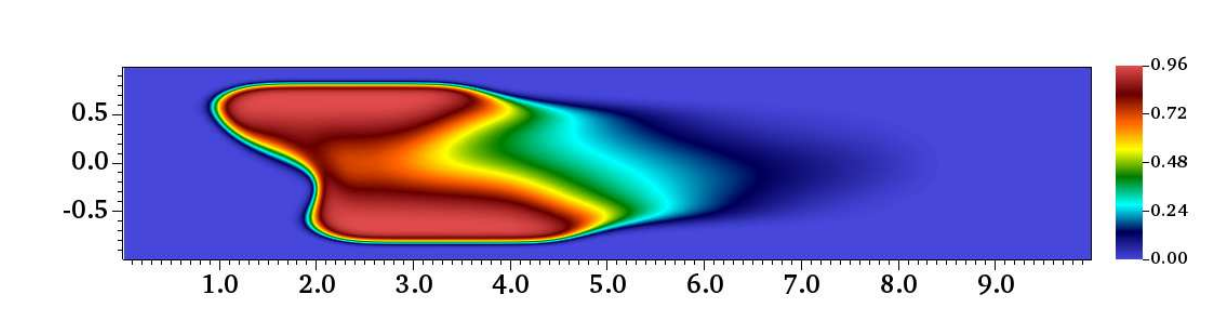}
         \caption{FO-scheme, $\Delta x=\Delta y=0.00625$\, $( 1600\times 320) $} 
     \end{subfigure} \hspace{0.1cm}
     \begin{subfigure}[b]{0.48\textwidth}
         \centering
         \includegraphics[width=\textwidth]{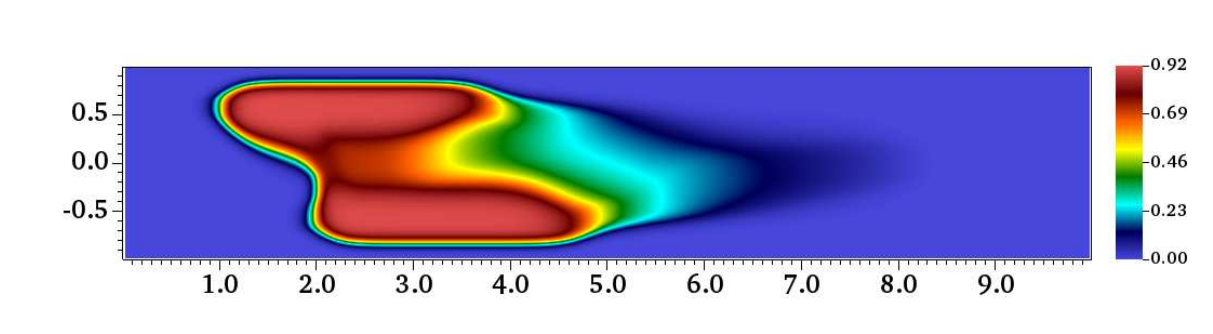}
         \caption{SO-scheme, $\Delta x = \Delta y = 0.025\, (400 \times\\ 80) $ } 
     \end{subfigure}
 \caption{Example \ref{ex-cm1}: Numerical solution $\rho$ of the problem \eqref{crowdproblem} with initial data \eqref{fig:ic_crowd}, obtained using the FO scheme with a mesh of resolution (a) ($800 \times 160$), (b) ($1600 \times 320$), (c) ($3200 \times 640$) and the SO scheme with a resolution of (d) $(800 \times 160)$ at time $t=4.0.$ In all the plots, the time step is taken as \cblue{$\Delta t = 0.026 \Delta x.$}}
 \label{fig:crowd_foso}
 \end{figure}

\begin{figure}
     \centering
          \begin{subfigure}[b]{0.48\textwidth}
         \centering
         \includegraphics[width=\textwidth]{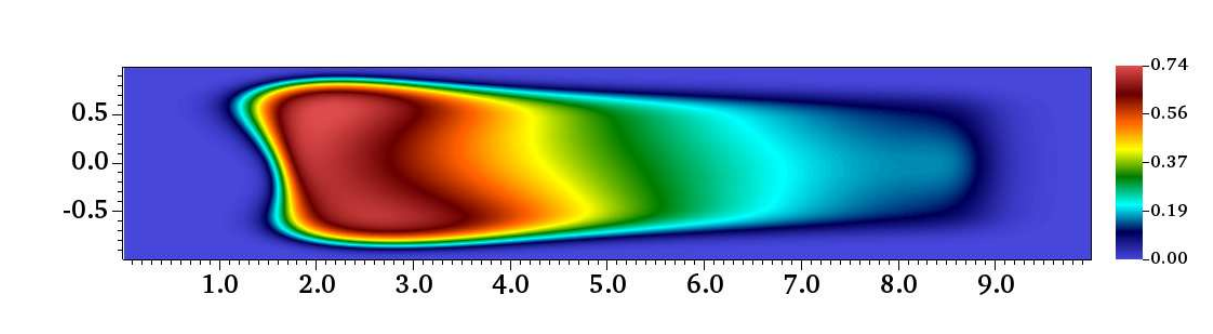}
         \caption{$\rho$ at time $t =8,$ FO-scheme, $\Delta x = \Delta y = 0.0125$} 
     \end{subfigure}\hspace{0.3cm}
      \begin{subfigure}[b]{0.48\textwidth}
         \centering
         \includegraphics[width=\textwidth]{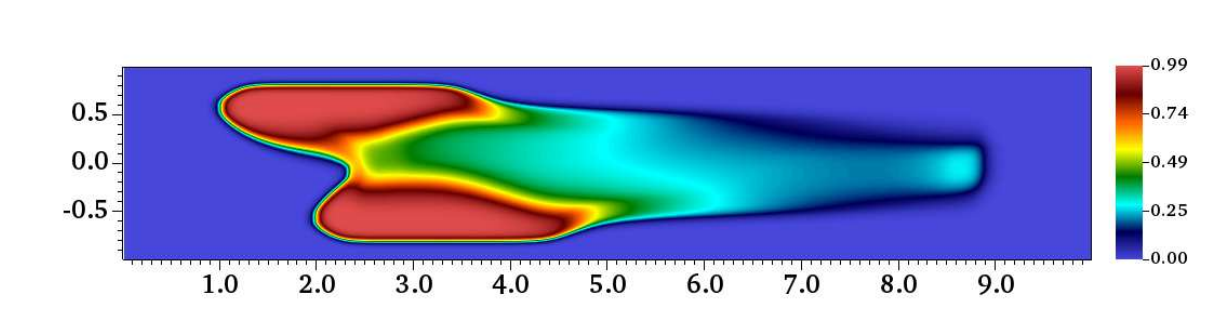}
         \caption{$\rho$ at time $t =8,$ SO-scheme, $\Delta x = \Delta y = 0.0125$} 
     \end{subfigure}\\

          \begin{subfigure}[b]{0.48\textwidth}
         \centering
         \includegraphics[width=\textwidth]{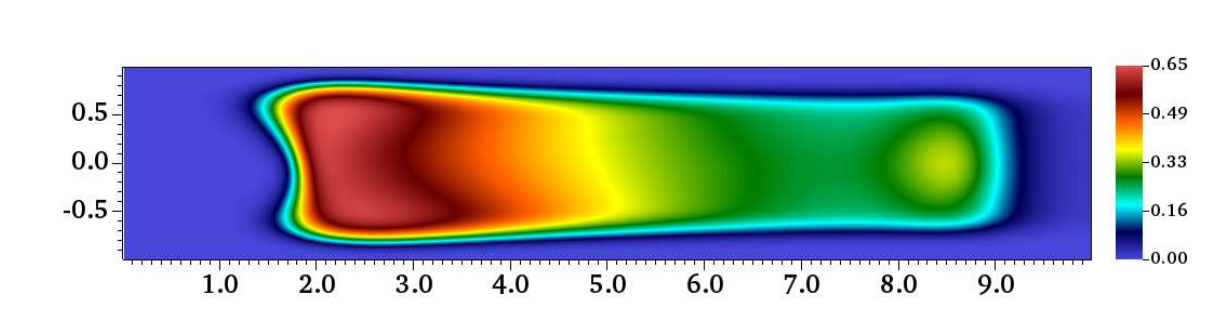}
         \caption{$\rho$ at time $t =12,$ FO-scheme, $\Delta x = \Delta y = 0.0125$} 
     \end{subfigure}\hspace{0.3cm}
       \begin{subfigure}[b]{0.48\textwidth}
         \centering
         \includegraphics[width=\textwidth]{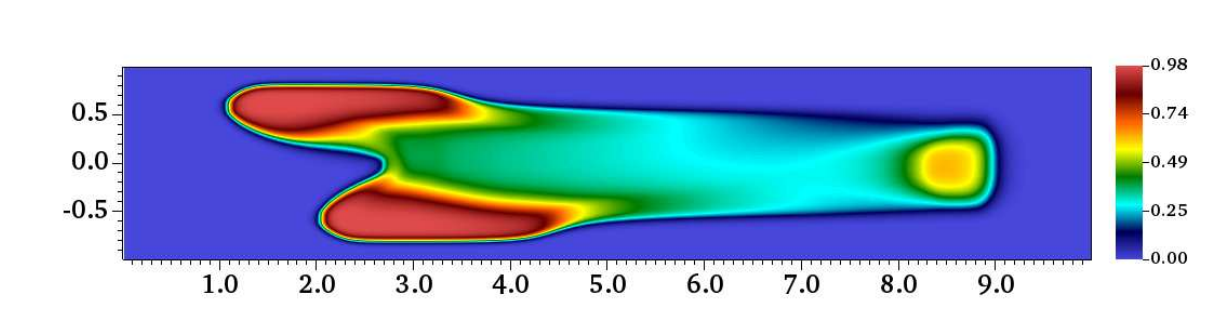}
         \caption{$\rho$ at time $t =12,$ SO-scheme, $\Delta x = \Delta y = 0.0125$} 
     \end{subfigure}\\

               \begin{subfigure}[b]{0.48\textwidth}
         \centering
         \includegraphics[width=\textwidth]{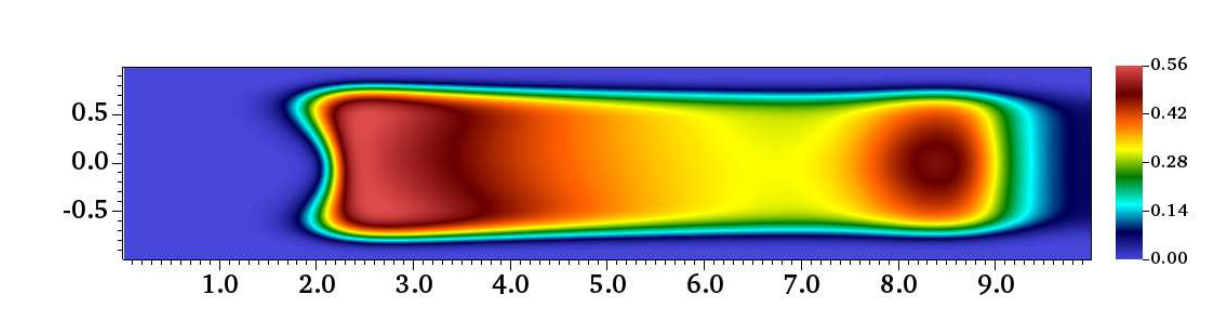}
       \caption{$\rho$ at time $t =16,$ FO-scheme, $\Delta x = \Delta y = 0.0125$} 
     \end{subfigure}\hspace{0.3cm}
           \begin{subfigure}[b]{0.48\textwidth}
         \centering
         \includegraphics[width=\textwidth]{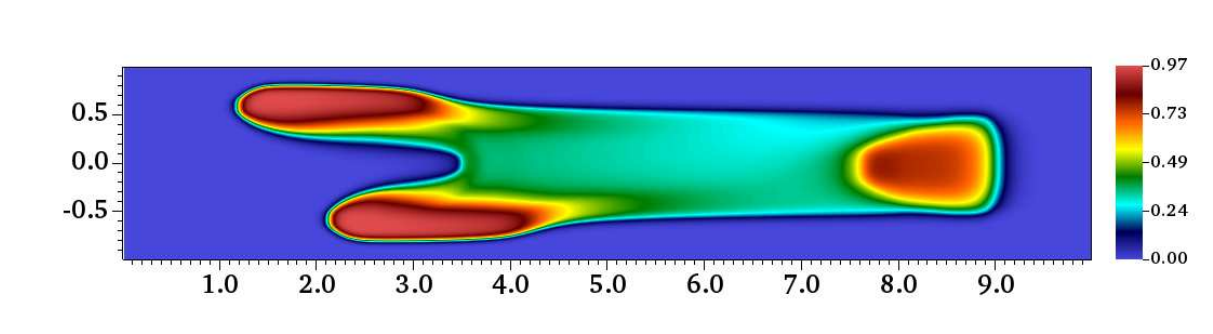}
         \caption{$\rho$ at time $t =16,$ SO-scheme, $\Delta x = \Delta y = 0.0125$}  
     \end{subfigure}

     \begin{subfigure}[b]{0.48\textwidth}
         \centering
         \includegraphics[width=\textwidth]{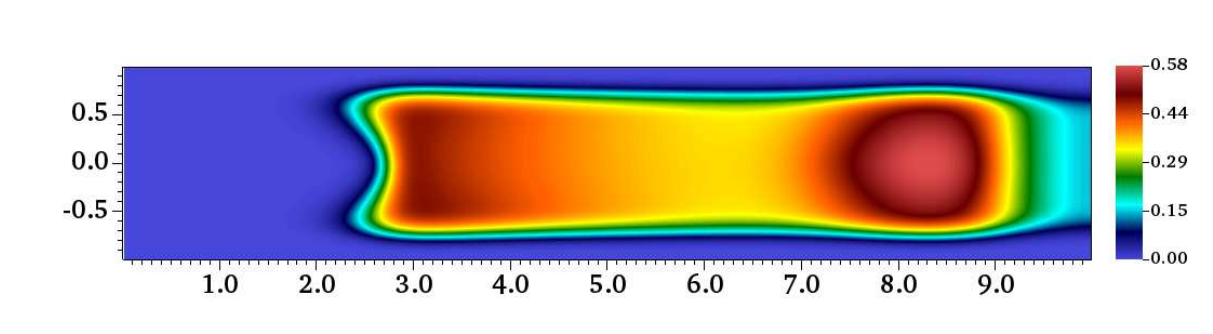}
       \caption{$\rho$ at time $t =20,$ FO-scheme, $\Delta x = \Delta y = 0.0125$} 
     \end{subfigure}\hspace{0.3cm}
           \begin{subfigure}[b]{0.48\textwidth}
         \centering
         \includegraphics[width=\textwidth]{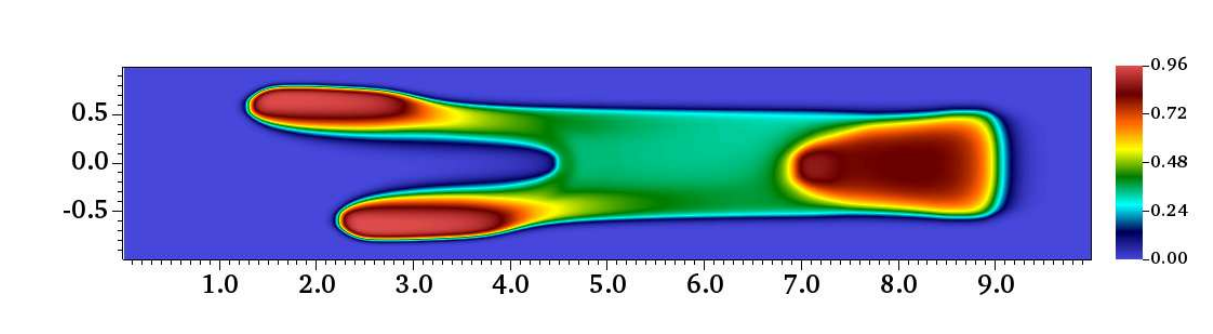}
         \caption{$\rho$ at time $t =20,$ SO-scheme, $\Delta x = \Delta y = 0.0125$}  
     \end{subfigure}
     \caption{Example \ref{ex-cm1}: Profile of approximate solutions $\rho$ of the problem \eqref{crowdproblem} with initial data \eqref{eq:ic} using (a, c, e, g) the  FO scheme and (b, d, f, h) the SO scheme. The time step is taken as \cblue{$\Delta t = 0.026 \Delta x.$}}
     \label{fig:crowd}
\end{figure}
\begin{table}[h!]
\caption{ Example \ref{ex-cm2}. $\mathrm{L}^1$ errors and E.O.C. obtained for the FO and SO schemes to solve the problem \eqref{crowdproblem}  with initial data  \eqref{eq:ic} at time $T=0.2$ with a time step of \cblue{$\Delta t = 0.026 \Delta x.$}}
\centering
\begin{tabular}{|c|l|l|l|l|}
\hline
& \multicolumn{2}{|c|}{FO scheme} & \multicolumn{2}{|c|}{SO scheme} \\ \hline
$h$ & $\|{\rho}_h - {\rho}_{\frac{h}{2}}\|_{{\mathrm{L}}^1}$& $\gamma$  &$\|{\rho}_h - {\rho}_{\frac{h}{2}}\|_{{\mathrm{L}}^1}$ & $\gamma $ \\ \hline
0.05 & 0.63622 &0.3036201& 0.506055 &  0.6217728          \\ \hline               
0.025 & 0.5154761& 0.3999979 & 0.3288709 & 0.7782156  \\ \hline       
0.0125 & 0.3906584 & 0.4629401& 0.1917605 & 0.7862285  \\ \hline     
0.00625 & 0.2834251 & -& 0.1111939 & - \\ \hline 
\end{tabular}
\vspace{0.3cm} 
\label{table:eoa}
\end{table}

\end{example}
\begin{example}\label{ex-kf1} We consider the non-local Keyfitz-Kranzer (KK) system, as introduced in \cite{aggarwal2015}, which extends the classical Keyfitz-Kranzer system from \cite{keyfitz1979} to a non-local framework.
This specific example of the two-dimensional system involves two unknowns, i.e., $N = 2$, with $\displaystyle \boldsymbol{\rho} = (\rho^{1}, \rho^{2}),$ and is given by
\begin{nalign}\label{eq:kksystem}
    \partial_{t}\rho^{1} + \partial_{x}(\rho^{1}\varphi^{1}(\mu * \rho^{1}, \mu*\rho^{2}))+  \partial_{y}(\rho^{1}\varphi^{2}(\mu * \rho^{1}, \mu*\rho^{2})) &= 0,\\
    \partial_{t}\rho^{2} + \partial_{x}(\rho^{2}\varphi^{1}(\mu * \rho^{1}, \mu*\rho^{2}))+  \partial_{y}(\rho^{2}\varphi^{2}(\mu * \rho^{1}, \mu*\rho^{2})) &= 0, 
\end{nalign}where  the functions $\varphi^1$ and $\varphi^2$ are defined as
\begin{equation*}
\begin{aligned}
    \varphi^1(A_1, A_2) &:= \sin(A_1^2 + A_2^2) \quad \mbox{and} \quad 
    \varphi^2(B_1, B_2) :=\cos(B_1^2 + B_2^2).
\end{aligned}
\end{equation*}
\par Here, the kernel function $\mu$ is given by
\begin{equation*}
\mu(x, y) = \frac{\tilde{\mu}(x, y)}{\int \int_{\mathbb{R}^2} \tilde{\mu}},\quad \mbox{ where }
\tilde{\mu} = \left( r^2 - (x^2 + y^2) \right)^3 \chi_{\{(x, y): x^2 + y^2 \leq r^2\}}(x, y)
\end{equation*} and $r$ represents the  radius of the support of $\mu.$
Note that, \eqref{eq:kksystem} fits into the framework of system \eqref{eq:system} with flux functions expressed in the form
\begin{align*}
    f^k(t, x, y,\rho^{k}, \boldsymbol{\eta}*\boldsymbol{\rho}) &:= \rho^{k}\varphi^{1}(\mu * \rho^{1}, \mu*\rho^{2}), \\
    \quad g^k(t, x, y,\rho^{k}, \boldsymbol{\nu}*\boldsymbol{\rho}) &:= \rho^{k}\varphi^{2}(\mu * \rho^{1}, \mu*\rho^{2}), \quad k \in \{1, 2\},
\end{align*}
where the kernel matrices $\boldsymbol{\eta}$ and $\boldsymbol{\nu}$ are given by
\begin{align*}
   \boldsymbol{\eta} = \boldsymbol{\nu} =  \begin{pmatrix}
       \mu & 0\\
       0 & \mu
   \end{pmatrix}
\end{align*}
and 
\begin{align*}
    \boldsymbol{\eta}*\boldsymbol{\rho} &=\boldsymbol{\nu}*\boldsymbol{\rho} = \left(\mu * \rho^{1}, \mu*\rho^{2}\right).
\end{align*}

We conduct numerical simulations   of the problem \eqref{eq:ic_kk} using  the initial condition (see Fig. \ref{fig:ic_kk})
\begin{equation}\label{eq:ic_kk}
\boldsymbol{\rho}_0(x, y)=\left(\rho_{0}^{1}(x,y), \rho_{0}^{2}(x,y)\right) = 
\begin{cases} 
(1, \sqrt{3}), & (x, y) \in (0, 0.4] \times (0, 0.4], \\
(\sqrt{2}, 1), & (x, y) \in [-0.4, 0] \times (0, 0.4], \\
\left( \frac{1}{2}, \frac{1}{3} \right), & (x, y) \in [-0.4, 0] \times [-0.4, 0], \\
(\sqrt{3}, \sqrt{2}), & (x, y) \in (0, 0.4] \times [-0.4, 0], \\
(0, 0), & \text{elsewhere},
\end{cases}
\end{equation}
described in the computational domain $[-1,1]\times [-1,1]$ with {\cblue {out flow boundary conditions  applied along all  boundaries of the domain.}} The radius is set to  $r = 0.0125$ and  the approximate solutions  $(\rho_1, \rho_2)$ are evolved up to time $T=0.1.$ We compare the solutions obtained using the FO and SO schemes for different resolutions, with a common time step of $\Delta t = 0.05\Delta x.$ \cblue{This time-step is determined from \eqref{cfl-2D} using the fact that $\norm{\partial_{\rho}f^{k}},\norm{\partial_{\rho}g^{k}}\leq 1,$ for $k=1,2,$ in this example.} In Fig. \ref{fig:kk}, the FO scheme solutions are computed on a $(1600\times 1600)$ mesh, while  the  SO scheme solutions are computed on a coarser mesh of size $(800\times 800).$ Similarly, in Fig. \ref{fig:kk_b}, we compare the FO scheme solutions on a $(3200\times3200)$ mesh with the SO scheme solutions on a $(1600\times1600)$ mesh. These results show that the SO scheme requires only half of the resolution of the  FO scheme to produce comparable solutions. This highlights the effectiveness of the SO scheme in simulating the given  problem. Furthermore, it is verified that the SO scheme preserves the positivity property and satisfies $\mathrm{L}^\infty$-stability, consistent with the theoretical results.

\begin{figure}
     \centering
          \begin{subfigure}[b]{0.4\textwidth}
         \centering
         \includegraphics[width=\textwidth]{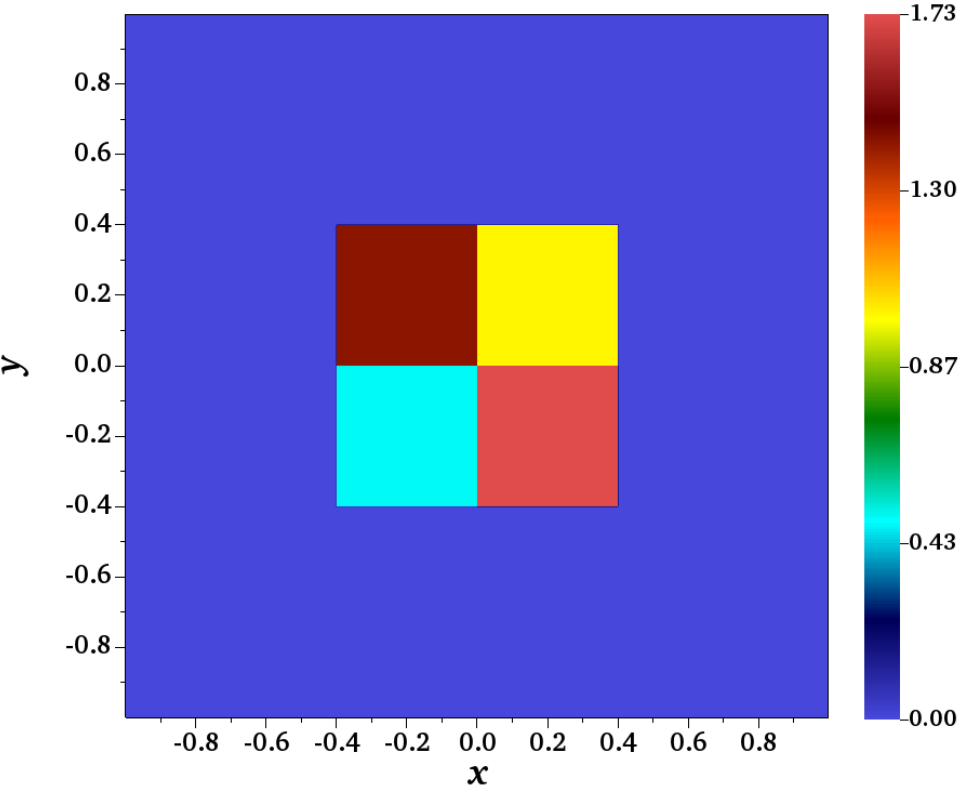}
         \caption{$\rho^1$} 
     \end{subfigure}\hspace{0.5cm}
     \begin{subfigure}[b]{0.4\textwidth}
         \centering
         \includegraphics[width=\textwidth]{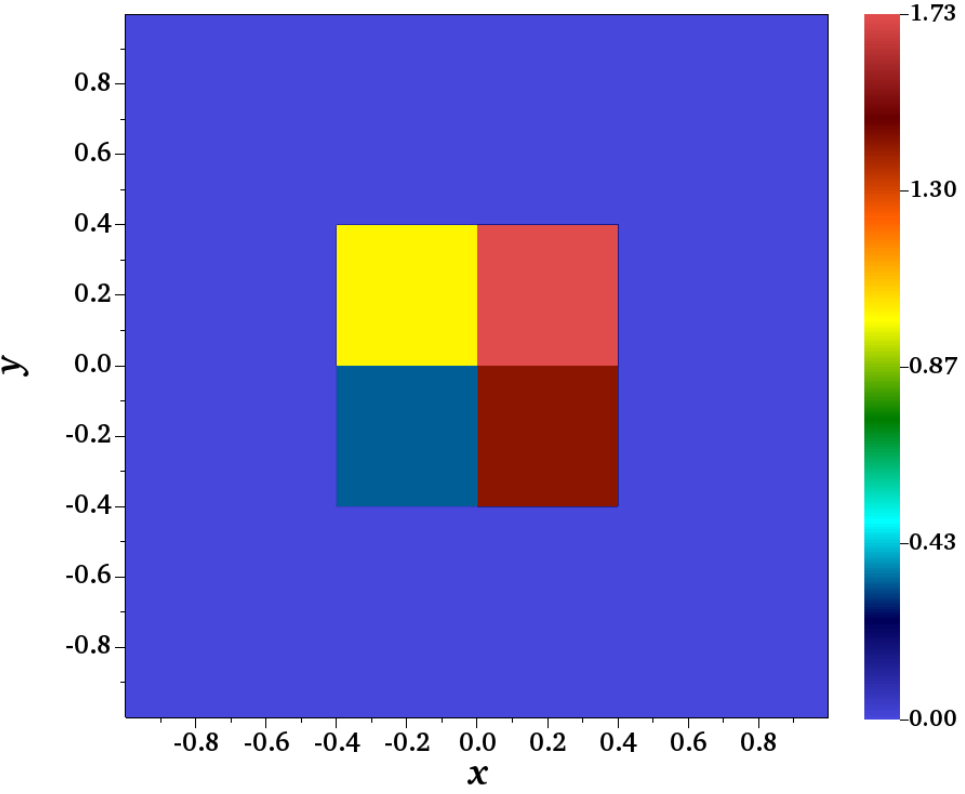}
         \caption{$\rho^2$ } 
     \end{subfigure}
     \caption{Example \ref{ex-kf1}: Initial condition \eqref{eq:ic_kk} for the KK system \eqref{eq:kksystem}.}
     \label{fig:ic_kk}
     \end{figure}

\begin{figure}
     \centering
          \begin{subfigure}[b]{0.4\textwidth}
         \centering
         \includegraphics[width=\textwidth]{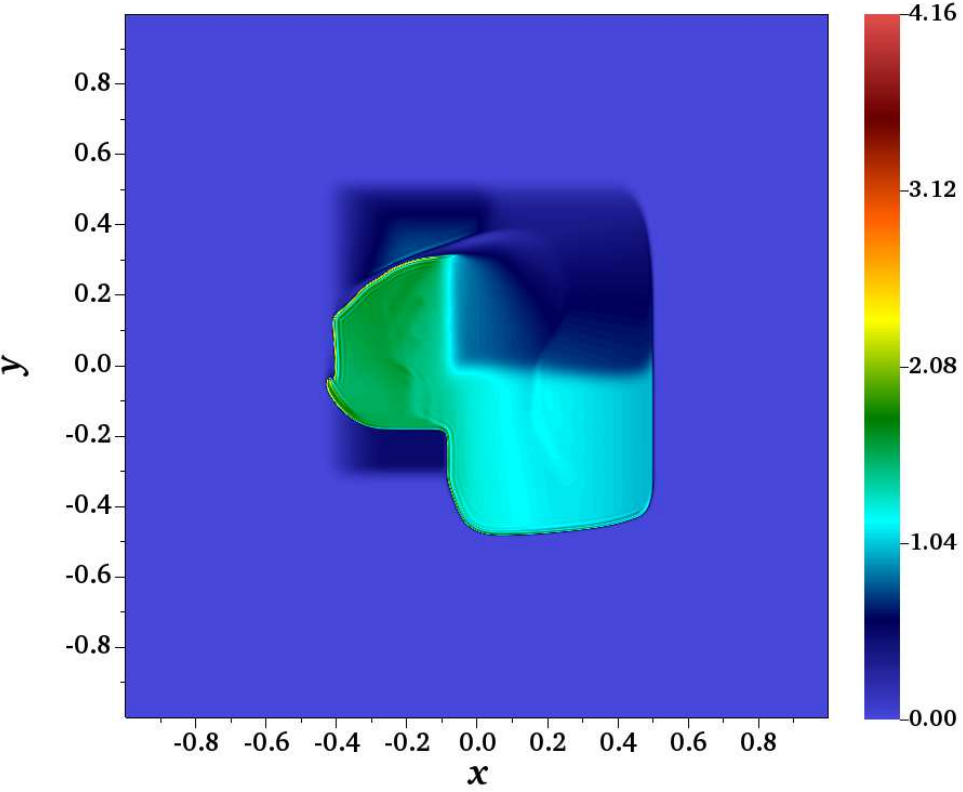}
         \caption{$\rho^1$(FO-scheme, $(1600\times 1600)$ cells)} 
     \end{subfigure}\hspace{0.5cm}
     \begin{subfigure}[b]{0.4\textwidth}
         \centering
\includegraphics[width=\textwidth]{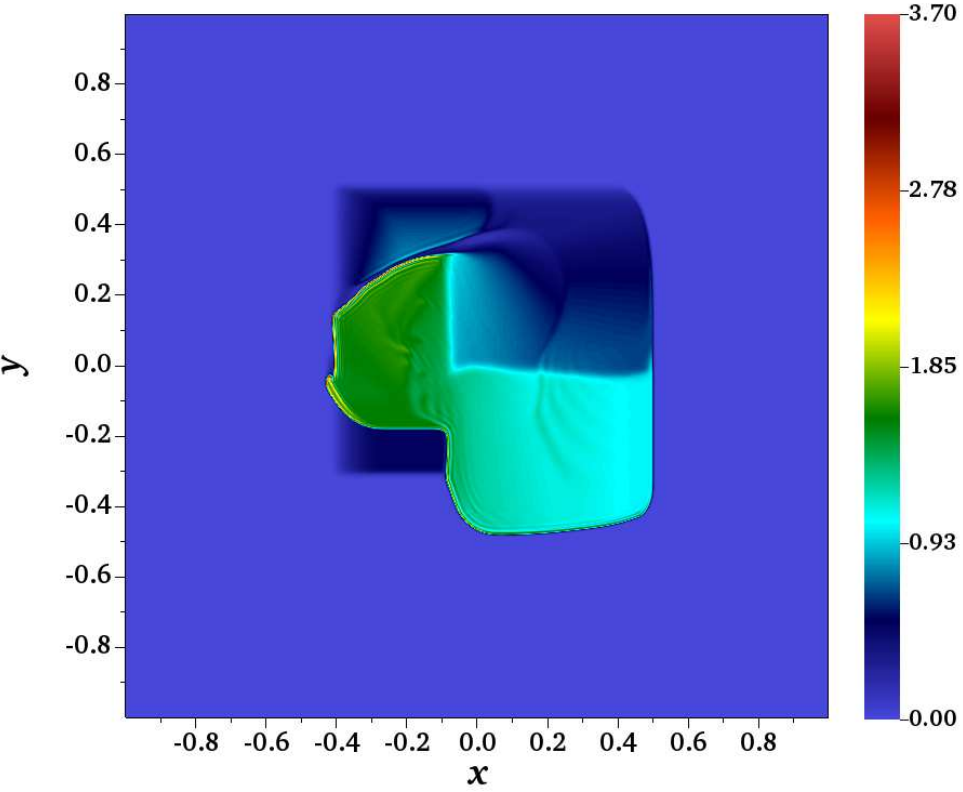}
         \caption{$\rho^1$(SO-scheme, $(800\times 800)$ cells)} 
     \end{subfigure}\\
     
          \begin{subfigure}[b]{0.4\textwidth}
         \centering
         \includegraphics[width=\textwidth]{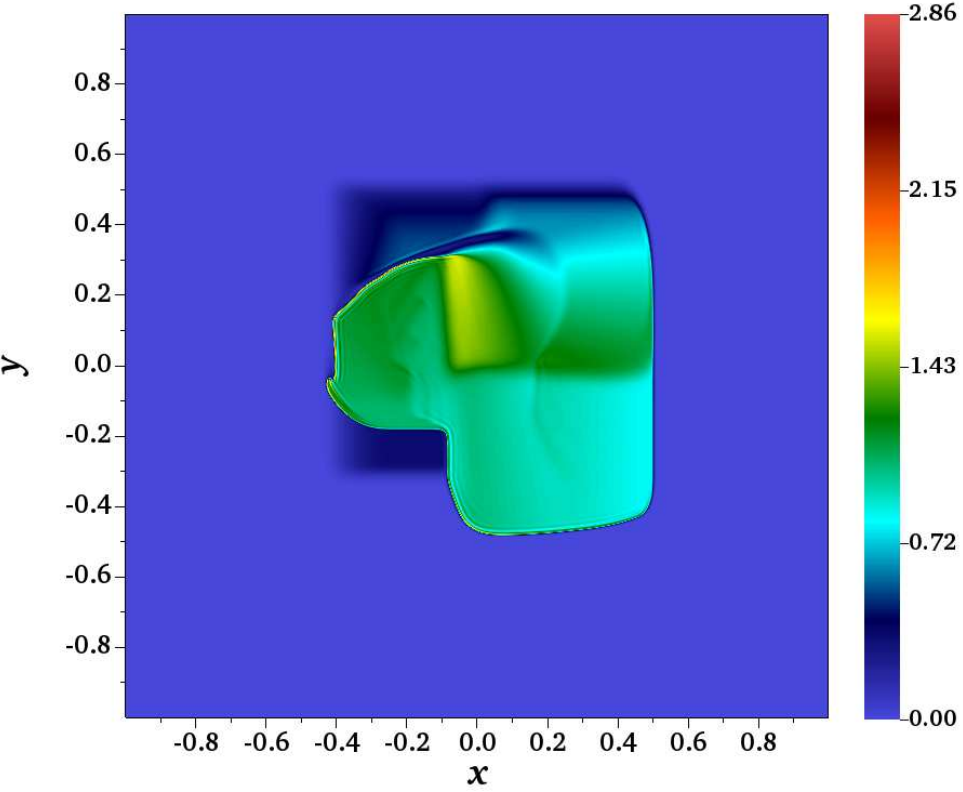}
         \caption{$\rho^2$(FO-scheme, $( 1600\times 1600)$ cells)} 
     \end{subfigure}\hspace{0.5cm}
      \begin{subfigure}[b]{0.4\textwidth}
         \centering
         \includegraphics[width=\textwidth]{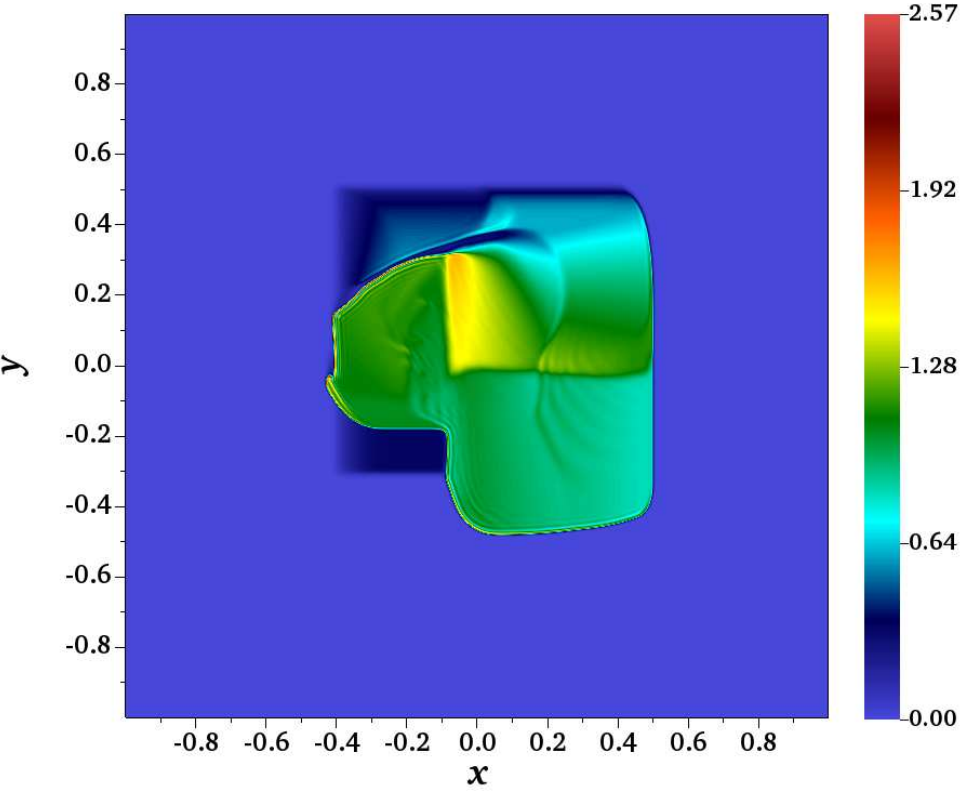}
        \caption{$\rho^2$(SO-scheme, $( 800\times 800)$ cells)} 
     \end{subfigure}
     \caption{Example \ref{ex-kf1}: Numerical solutions $\rho^1$ and $\rho^2$ of the KK system \eqref{eq:kksystem} with the initial condition \eqref{eq:ic_kk}, computed at time $T = 0.1$ using (a, c) the FO scheme and (b, d) the SO scheme. The time step is set as $\Delta t = 0.05 \Delta x,$ and  the parameter of the kernel function is taken as $r = 0.0125.$ FO scheme solutions are computed with a mesh resolution $(1600\times 1600),$ while  SO scheme solutions use a resolution of $(800\times800).$}
     \label{fig:kk}
    \end{figure}

\begin{figure}
     \centering
          \begin{subfigure}[b]{0.4\textwidth}
         \centering
         \includegraphics[width=\textwidth]{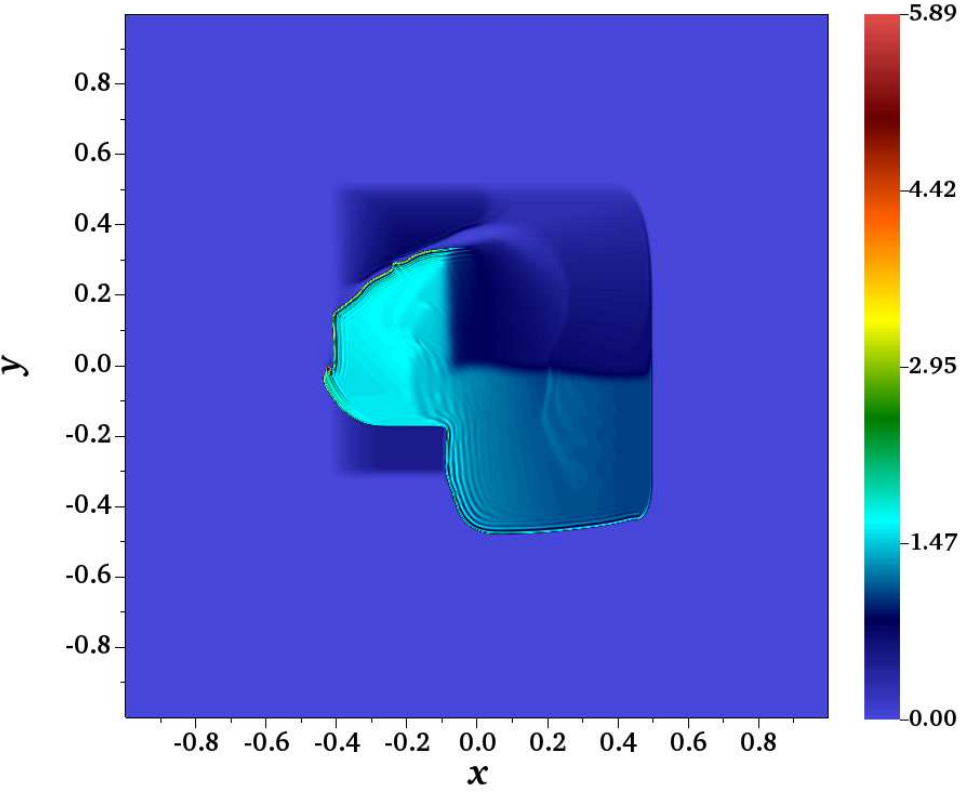}
         \caption{$\rho^1$(FO- scheme,$(3200\times 3200)$ cells) } 
     \end{subfigure} \hspace{0.5cm}
     \begin{subfigure}[b]{0.4\textwidth}
         \centering
         \includegraphics[width=\textwidth]{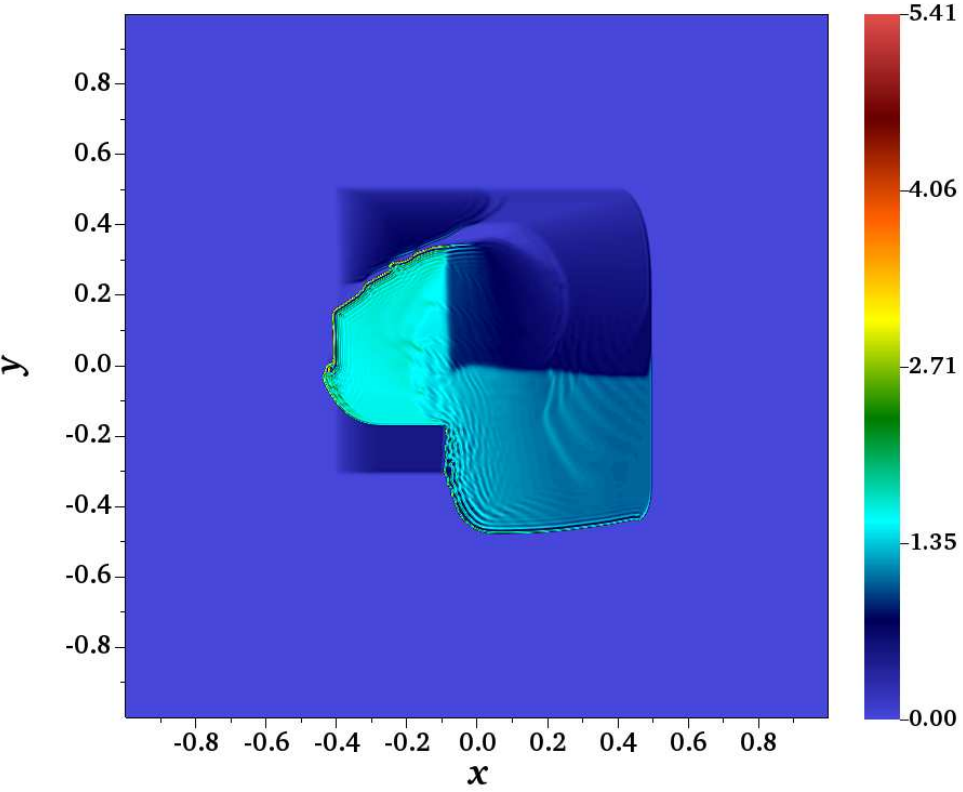}
         \caption{$\rho^1$(SO-scheme, $(1600\times 1600)$ cells)} 
     \end{subfigure}\\
     
          \begin{subfigure}[b]{0.4\textwidth}
         \centering
         \includegraphics[width=\textwidth]{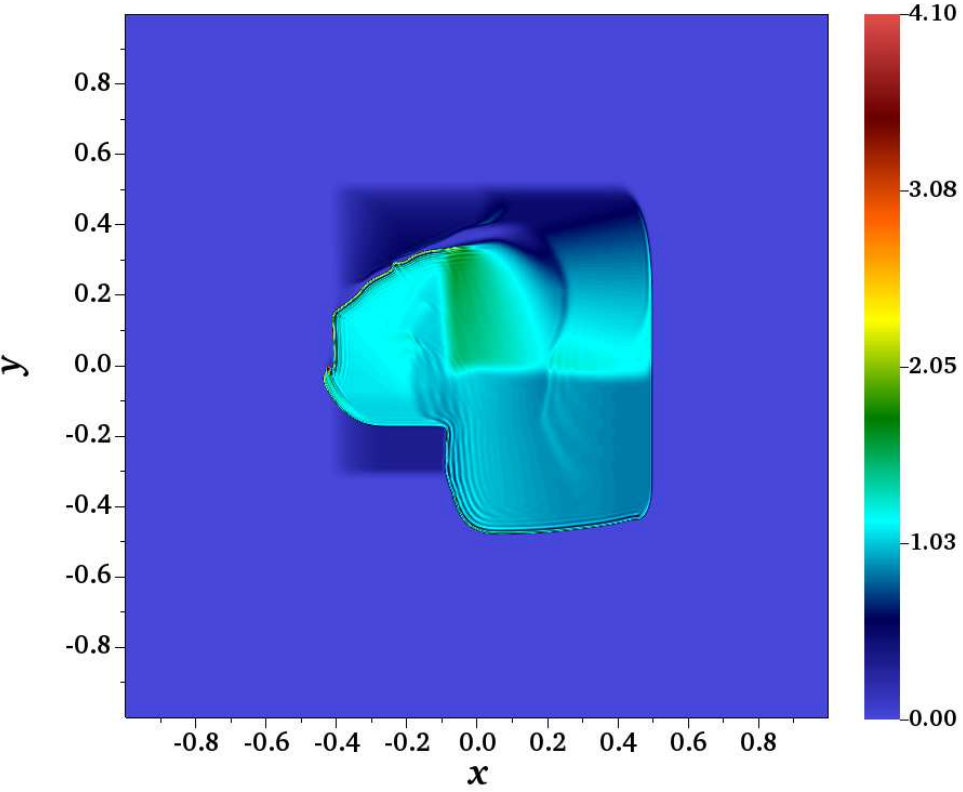}
         \caption{$\rho^2$(FO-scheme,$( 3200\times 3200)$ cells)} 
     \end{subfigure}\hspace{0.5cm}
      \begin{subfigure}[b]{0.4\textwidth}
         \centering
         \includegraphics[width=\textwidth]{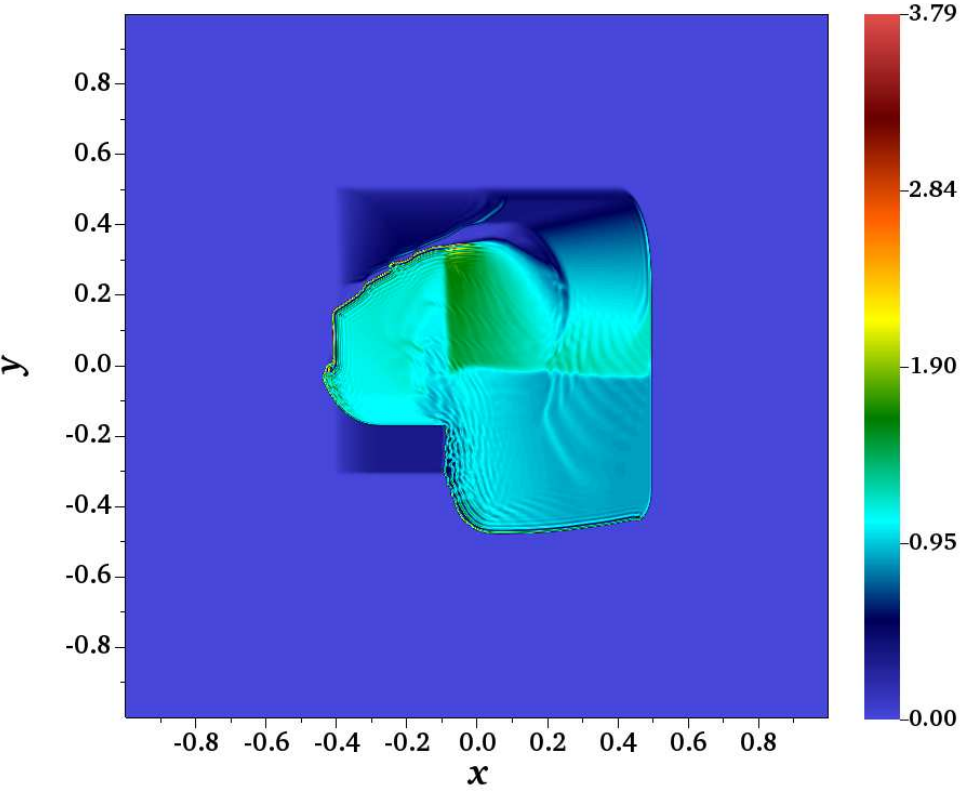}
        \caption{$\rho^2$(SO- scheme,$( 1600\times 1600)$ cells)} 
     \end{subfigure}
     \caption{Example \ref{ex-kf1}: Numerical solutions $\rho^1$ and $\rho^2$ of the KK system \eqref{eq:kksystem} with the initial condition \eqref{eq:ic_kk}, computed at time $T = 0.1$ using (a, c) the FO scheme and (b, d) the SO scheme. The time step is set as \cblue{$\Delta t = 0.05 \Delta x$} and  the parameter of the kernel function is taken as $r = 0.0125.$ FO scheme solutions are computed with a mesh resolution $(3200\times 3200),$ while SO scheme solutions use a resolution of $(1600\times 1600).$ }
         \label{fig:kk_b}
    \end{figure}
     \end{example}
     \begin{example}\label{ex-kf2} In this example, we consider the behavior of  solutions of the non-local Keyfitz-Kranzer model \eqref{eq:kksystem} as the radius of the convolution kernels approaches zero, which is equivalent to  the convolution kernels converging to the Dirac delta distribution. \cblue{This problem, known as the \textquoteleft singular limit problem\textquoteright \, has been investigated numerically in \cite{amorim2015,aggarwal2015}, and theoretical results have been established for specific cases in \cite{coclite2020,colombocrippa2023,colombocrippa2019}. However, analytical convergence results for the general case remain an open problem.}
     It is desirable  that numerical schemes that approximate non-local models retain their robustness under variations in model parameters. A recent study  in this direction is available in \cite{kuang2024}. In view of this, we investigate the behavior of both the FO and SO schemes for the singular limit problem, where the local version of the Keyfitz-Kranzer system is given by:
\begin{nalign}\label{eq:kk_local}
    \partial_{t}\rho^{1} + \partial_{x}(\rho^{1}\varphi^{1}(\rho^{1}, \rho^{2}))+  \partial_{y}(\rho^{1}\varphi^{2}(\rho^{1}, \rho^{2})) &= 0,\\
    \partial_{t}\rho^{2} + \partial_{x}(\rho^{2}\varphi^{1}(\rho^{1}, \rho^{2}))+  \partial_{y}(\rho^{2}\varphi^{2}(\rho^{1}, \rho^{2})) &= 0. 
\end{nalign}
\par 
We perform this analysis using  radii of convolution kernels $r\in \{0.04, 0.02,\\ 0.01, 0.005, 0.0025\}$ across different time levels $t\in\{0.03, 0.07, 0.1\}.$  We compute the $\mathrm{L}^1$ distance between  the solutions corresponding to the non-local \eqref{eq:kksystem} and local \eqref{eq:kk_local} versions of KK system, with the  initial condition specified in \eqref{eq:ic_kk}. All solutions for the non-local problem are computed on a mesh of $(1600\times 1600)$ cells while  the local model \eqref{eq:kk_local} solutions are computed on a finer mesh size of $(3200\times 3200)$ cells with SO scheme. In all the computations, the time step is set as \cblue{$\Delta t = 0.05 \Delta x,$} and the boundary conditions are as in Example \ref{ex-kf1}. The results displayed in Table \ref{table:loc_conv} indicate that  the SO scheme solutions converge to the local version as the parameter $r$ approaches zero. Furthermore,  we observe that the rate of convergence of the SO scheme is higher than that of the FO scheme. 

\begin{table}[h!]
\caption{Example \ref{ex-kf2}: $\mathrm{L}^1$ distance between the solutions corresponding to the non-local \eqref{eq:kksystem} and local \eqref{eq:kk_local} versions of KK system with initial condition \eqref{eq:ic_kk} for the  FO and SO schemes, computed on a mesh of resolution $(1600\times 1600)$. The solutions of the local problem  are computed  with a mesh of $(3200 \times 3200)$ cells using the SO scheme. The  kernel radii are chosen as $r\in \{0.04, 0.02, 0.01, 0.005, 0.0025\},$ solutions are computed at times $t\in\{0.03, 0.07, 0.1\}$ and the time step is set as $\Delta t = 0.05 \Delta x.$}
\centering
\begin{tabular}{|c|c|l|l|l|l|l|l|l|l|}
\cline{1-8}
\multirow{2}{*}{Scheme}& & \multicolumn{3}{|c|}{$\rho^1$} & \multicolumn{3}{|c|}{$\rho^2$}\\ \cline{2-8}
&\diagbox[]{$r$}{$t$} & 0.03 & 0.07 & 0.1  &0.03 & 0.07 & 0.1 \\ \cline{1-8}
\multirow{5}{*}{FO}&  0.04 &  0.0937 & 0.1446 & 0.1575 & 0.0837 & 0.1344  & 0.1376\\ \cline{2-8}
& 0.02 & 0.0576 &0.0836 & 0.0882 & 0.0519 & 0.0790 & 0.0808\\ \cline{2-8}
& 0.01 & 0.0384 & 0.0519  & 0.0531 & 0.0344 & 0.0493 & 0.0496 \\  \cline{2-8}
&0.005  & 0.0250 & 0.0317 & 0.0317 & 0.0225  & 0.0297  & 0.0293 \\ \cline{2-8}
&0.0025  & 0.0179  & 0.0226  & 0.0239  & 0.0169  & 0.0208 & 0.0216 \\ \cline{1-8}
\multirow{5}{*}{SO}&  0.04 & 0.1323  & 0.2379  &  0.2839  &0.1223  & 0.2262 & 0.2586 \\ \cline{2-8}
&0.02 & 0.0843 & 0.1373 & 0.1511 & 0.0789 & 0.1327 & 0.1392 \\ \cline{2-8}
& 0.01 & 0.0492  & 0.0749  & 0.0807 & 0.0462  & 0.0750  & 0.0787  \\ \cline{2-8}
& 0.005 & 0.0288 & 0.0390 & 0.0410  & 0.0263  & 0.0389  & 0.0395  \\ \cline{2-8}
&0.0025 & 0.0115 & 0.0138 & 0.0137 & 0.0100 & 0.0133 &   0.0129 \\ \cline{1-8}
\end{tabular}
\vspace{0.5cm}
\label{table:loc_conv}
\end{table}

\end{example}

\section{Conclusion}\label{section:conclusion}
In this work, we propose a fully discrete second-order scheme for a general system of non-local conservation laws in multiple dimensions. The resulting scheme is theoretically shown to satisfy the positivity-preserving property and proven to be  $\mathrm{L}^\infty$-stable. Numerical experiments clearly indicate the superiority of the SO scheme over its first-order counterpart, as illustrated in Figs. \ref{fig:crowd} and \ref{fig:kk} for both scalar and system cases. We have also shown the numerical convergence of the SO scheme in the scalar case and compared \cblue{it} to \cblue{that of} the FO scheme, see Table \ref{table:eoa}.  The robustness of the SO scheme is further evaluated  in the context of the  \textquoteleft singular limit problem\textquoteright  \, and the results show that the SO scheme solutions approach the local problem as the parameter $r$ tends to zero, with a higher convergence rate compared to  that of FO scheme, as is evident from Table \ref{table:loc_conv}. 
Additionally, we wish to note that a key challenge in analyzing the theoretical convergence of the second-order scheme lies in deriving a bounded variation estimate. To the best of our knowledge, such estimates are unavailable in the literature for the case of local conservation laws as well, except \cblue{for the weak-BV estimates} in \cite{Coquel1991}. We plan to address these theoretical aspects in a future work.

%
\newpage

\bibliographystyle{spmpsci}      
\bibliography{ref}
%
%

\end{document}